\documentclass[12pt]{article}
\usepackage{graphicx}\usepackage{amsmath}\usepackage{amsfonts}\usepackage{amssymb}\usepackage{color}\usepackage{float}
\setcounter{MaxMatrixCols}{30}
\providecommand{\U}[1]{\protect\rule{.1in}{.1in}}
\newtheorem{theorem}{Theorem}

\newtheorem{lemma}[theorem]{Lemma}
\newtheorem{proposition}[theorem]{Proposition}\newtheorem{remark}[theorem]{Remark}

\newenvironment{proof}[1][Proof]{\noindent\textbf{#1.} }{\ \rule{0.5em}{0.5em}}
\def\myblacksquare{\rule{1.2ex}{1.2ex}}
\def\bzero{\mathbf{0}}
\begin{document}
$\ $

\vspace{2.cm}
\begin{center}
{\Large \textbf{Critical configurations of solid bodies and}}

\vspace{.2cm}
{\Large \textbf{the Morse theory of MIN functions}}

\vspace{.6cm} {\large \textbf{Oleg Ogievetsky$^{1,2,3}$ and Senya Shlosman$^{1,4,5}$}}

\vskip .3cm $^{1}$Aix Marseille Universit\'{e}, Universit\'{e} de Toulon, CNRS, \newline CPT UMR 7332, 13288, Marseille, France

\vskip .05cm $^{2}$I.E.Tamm Department of Theoretical Physics, Lebedev Physical Institute, Leninsky prospect 53, 119991, Moscow, Russia

\vskip .05cm $^{3}$Kazan Federal University, Kremlevskaya 17, Kazan 420008, Russia

\vskip .05cm $^{4}$Inst. of the Information Transmission Problems, RAS, Moscow, Russia

\vskip .05cm $^{5}$Skolkovo Institute of Science and Technology, Moscow, Russia
\end{center}

\vspace{.2cm}
\begin{abstract}\noindent
We study the manifold of clusters of nonintersecting congruent solid bodies, all touching the central ball $B\subset\mathbb{R}^{3}$ of radius one. Two main examples are clusters of balls and clusters of infinite cylinders. We introduce the notion of \textit{critical cluster} and we study several critical clusters of balls and of cylinders. For the case of cylinders some of our critical clusters are new. We also establish the criticality properties of clusters, introduced earlier by W. Kuperberg.
\end{abstract}

\section{Introduction}
In this paper we will study manifolds $\mathcal{C}$ comprised by configurations of collections of solid bodies $\bar{\Lambda}_{1}
,\ldots,\bar{\Lambda}_{k}\subset\mathbb{R}^{3},$ $k>1$, touching the central unit ball $B\subset\mathbb{R}^{3}$. That is, the point $G$ of our manifold
$\mathcal{C}$ will be a configuration of non-intersecting solid bodies, $G=\left\{  \Lambda_{1},\ldots,\Lambda_{k}\right\}$, where each $\Lambda_{i}$ is
congruent to the corresponding shape $\bar{\Lambda}_{i}$ and is touching the unit ball $B$. It is allowed that some distances between bodies of $G$ are
zero. Any collection of such type we will call a \textit{cluster with a core} $B$ or just a \textit{cluster. }

\vskip .2cm
Evidently, the group $SO\left(  3\right)  $ acts on each $\mathcal{C}.$ So it is natural to study these manifolds $\operatorname{mod}\left(  SO\left(
3\right)  \right)  .$

\vskip .2cm
For the examples of various clusters see pictures below.

\vskip .2cm
By a deformation of the cluster $G$ we mean a continuous curve $G\left( t\right)  $ in the space of clusters, with $G\left(  0\right)  =G$. That means
that each $\Lambda_{i}$ touches the central ball $B$ during the process of the deformation.

\vskip .2cm
We call a cluster $G$ \textit{rigid, }if any deformation $G\left(  t\right)$ of $G$ has a form
\[ G\left(  t\right)  =g\left(  t\right)  G\ ,\]
where $g\left(  t\right)  \in SO\left(  3\right)  $ is a curve of rotations of $\mathbb{R}^{3},$ $g\left(  0\right)  =e.$ In other words, the only
deformation of $G$ available is the global rotation of $G$ as a solid body.

\vskip .2cm
We call a cluster $G$ \textit{flexible}, if it is not rigid, but during each deformation $G\left(  t\right)  $ some distances $\mathrm{dist}\left(
\Lambda_{i},\Lambda_{j}\right)  $ which were zero at $t=0$ remain zero at later moments $t$ (at least up to a moment $t_{0}>0$ which might depend on
the deformation $G\left(  \ast\right)  $).

\vskip .2cm
We say that $G$ can be unlocked, if there exists a continuous deformation $G\left(  t\right)  $ of $G$, such that for any $t>0$ all the distances
between the members $\Lambda_{i}$ in the cluster $G\left(  t\right)$ are positive (while each $\Lambda_{i}$ always touches the central ball during the move).

\vskip .2cm
An example of rigid cluster is the icosahedral cluster of balls, see section \ref{ico} below (note that the dodecahedral cluster of balls can be unlocked).

\vskip .2cm
An example of a flexible cluster is the arrangement of 5 balls of radius $1+\sqrt{2}$ around central unit ball $B$, one touching $B$ at the North pole,
one at the South pole, the other three touching along the equator. For another example see section \ref{four}.

\vskip .2cm
Finally, we call a cluster $G$ \textit{critical, }if for any smooth deformation $G\left(  t\right)  $ of $G$ all the distances $\mathrm{dist}
\left(  \Lambda_{i}\left(  t\right)  ,\Lambda_{j}\left(  t\right)  \right)$ between the solids $\Lambda_{i}$ which were zero at $t=0$ -- i.e.
$\mathrm{dist}\left(  \Lambda_{i}\left(  0\right)  ,\Lambda_{j}\left( 0\right)  \right)  =0$ -- obey the estimate
\begin{equation}
\mathrm{dist}\left(  \Lambda_{i}\left(  t\right)  ,\Lambda_{j}\left(
t\right)  \right)  =o\left(  t\right)  \text{ as }t\rightarrow 0\ .\label{crit}
\end{equation}
If a critical cluster $G$ can be unlocked, then it is called a \textit{saddle} cluster. Other critical clusters are called \textit{(local) maxima}, for
obvious reasons.

\vskip .2cm
In the present review we consider two types of solid bodies arrangements. One type consists of arrangements of balls $\Lambda_{i}$ of equal
radius $r,$ around $B$. Another type consists of arrangements of (infinite, right, circular) congruent cylinders around $B$. For balls, we present some
results of the paper \cite{KKLS}. For cylinders, we review the results of our recent studies \cite{OS1, OS2, OS3,OS4}.

\section{Critical clusters of balls}
\subsection{Maximal clusters of balls \label{ico}}
The icosahedral cluster $I_{12}$ of 12 equal balls gives an example of a maximal cluster. In 1943 Fejes-T\'{o}th has shown that

\vskip .2cm
~(1) The maximum radius of $12$ equal spheres touching a central sphere of radius $1$ is
\[ r_{max}(12) = \frac{1}{\sqrt{\frac{5+ \sqrt{5}}{2}} -1} \approx1.1085085\ . \]

\vskip .2cm
(2) An extremal cluster achieving this radius is formed by the balls centered at $12$ vertices of a regular icosahedron.

\vskip .2cm
One can call therefore the icosahedral cluster {\it the globally maximal}.

\vskip .2cm
There are other maximal clusters of $12$ equal spheres$.$ One of them, $A_{6,6},$ is given by equal balls centered at the vertices of a uniform
6-antiprism (note that the radii of these balls are less than $1$). In general, for every $n,$ the cluster of $2n$ equal spheres centered at the vertices of
a uniform $n$-antiprism, is locally maximal. For $n=2,3,4$ they are, in fact, global maxima.

\vskip .2cm
It is conjectured in \cite{KKLS} that for the case of $12$ balls there are other (sharp) locally maximal clusters of balls of radius $r$, $1<r<r_{max}
(12).$ The three candidate clusters are explicitly described there, and the proof requires just a computation, which, however, is too cumbersome.

\vskip .2cm
Both maximal clusters $I_{12}$ and $A_{6,6}$ are $PL$-maximal (piecewise-linear maximal). To explain this statement as well as to introduce
the connection with the Morse theory,  let $\mathcal{P}_{n}$ be the manifold of $n$-tuples of points on $\mathbb{S}^{2}$. To every cluster of $n$ balls we
associate a point in $\mathcal{P}_{n}$: it is the cluster of $n$ points at which the balls of the cluster are touching the central ball.
Consider the function $\delta$ on $\mathcal{P}_{n}$:
\[ \delta\left(  p\right)  =\min_{i\neq j}\mathrm{dist}\left(  x_{i},x_{j}\right)\ \ \text{where $p=\left\{  x_{1},...,x_{12}
\right\}  \in\mathcal{P}_{n}$} .\]
Define also the metric $\tilde{d}\left(  p^{\prime},p^{\prime\prime}\right)$ on $\mathcal{P}_{n}$ to be the Hausdorff distance between the two subsets of
$\mathbb{S}^{2}.$ To take into account the $SO\left(  3\right)  $-symmetry, which we want to factor out, we put also
\[ d\left(  p^{\prime},p^{\prime\prime}\right)  =\inf_{g\in SO\left(  3\right) }\tilde{d}\left(  p^{\prime},gp^{\prime\prime}\right) \ .\]
The criticality of the cluster is translated into the criticality of $\delta$ at the corresponding point: the point $p$ is called critical, if
\[ \delta\left(  p\right)  -\delta\left(  p^{\prime}\right)  \leq o\left( d\left(  p,p^{\prime}\right)  \right) \ .\]
The critical point $p_{I_{12}}\in\mathcal{P}_{12}$ -- i.e. the set of 12 vertices of the icosahedron $I_{12}$ -- is the point of the global maximum of
the function $\delta.$ The $PL$-maximality of $p_{I_{12}}$ is the property that for some constant $c_{I_{12}}>0$ we have
\begin{equation} \delta\left(  p_{I_{12}}\right)  -\delta\left(  p\right)  \geq c_{I_{12} }d\left(  p_{I_{12}},p\right)  ,\text{ provided }d\left(  p_{I_{12}},p\right)
\text{ is small enough.}\label{72}\end{equation}
In words, the condition $\left(
\ref{72}\right)  $ means that the function $\delta$ decays linearly with distance as we move away from $p_{I_{12}}.$ The same holds at the point
$p_{A_{6,6}},$ with a different constant $c_{A_{6,6}}>0$.
In fact, the relation $\left(  \ref{72}\right)  $ holds at the point $p_{I_{12}}$ without the smallness assumption;
at $p_{A_{6,6}}$ the relation $\left(  \ref{72}\right)  $ holds only locally.

\vskip .2cm
We conjecture that the same $PL$-maximality holds for any local maximum of the function $\delta$ on $\mathcal{P}_{n},$ i.e. for any locally maximal cluster
of $n$ equal balls, provided $n\geq 6.$

\vskip .2cm
Note that the maximal cluster $T_{3}$ of $3$ balls of radius $\frac{\sqrt{3} }{2-\sqrt{3}}$ touching the central unit ball is not $PL$-maximal; there
exists a curve $p\left(  t\right)  \subset\mathcal{P}_{3},$ $p\left( 0\right)  =p_{T_{3}},$ such that $d\left(  p_{T_{3}},p\left(  t\right)
\right)  =t,$ while the decay of $\delta$  is only quadratic:
\[ \delta\left(  p_{T_{3}}\right)  -\delta\left(  p\left(  t\right)  \right)\sim c_{T_{3}}t^{2}\ ,\ c_{T_{3}}>0\ . \]

\subsection{Saddle clusters of 12 balls}
The smallest value of $r$ for which there exists a critical cluster of 12 equal balls around the unit ball $B$ is $r=\frac{\sqrt{3}-1}{2\sqrt{2}-\sqrt{3}+1}\approx 0.3492$. The  saddle cluster of  balls of radius $r=\frac{\sqrt{3}-1}{2\sqrt{2}-\sqrt{3}+1}$ is a necklace of 12 balls
all touching $B$ at the equator, each touching two others.

\vskip .2cm
The most famous 12 ball clusters are the \text{FCC} (Face Centered Cubic) and the \text{HCP} (Hexagonal Closed Packed) clusters of unit balls. Each of them
can be part of a densest unit ball packing in $\mathbb{R}^{3}.$ The statement that the maximal density of a sphere packing in 3-dimensional space
is attained by the FCC packing, is called the Kepler Conjecture. It was proven by Hales and Ferguson, \cite{HF}.

\begin{figure}[H]
 \centering
\includegraphics[scale=0.28]{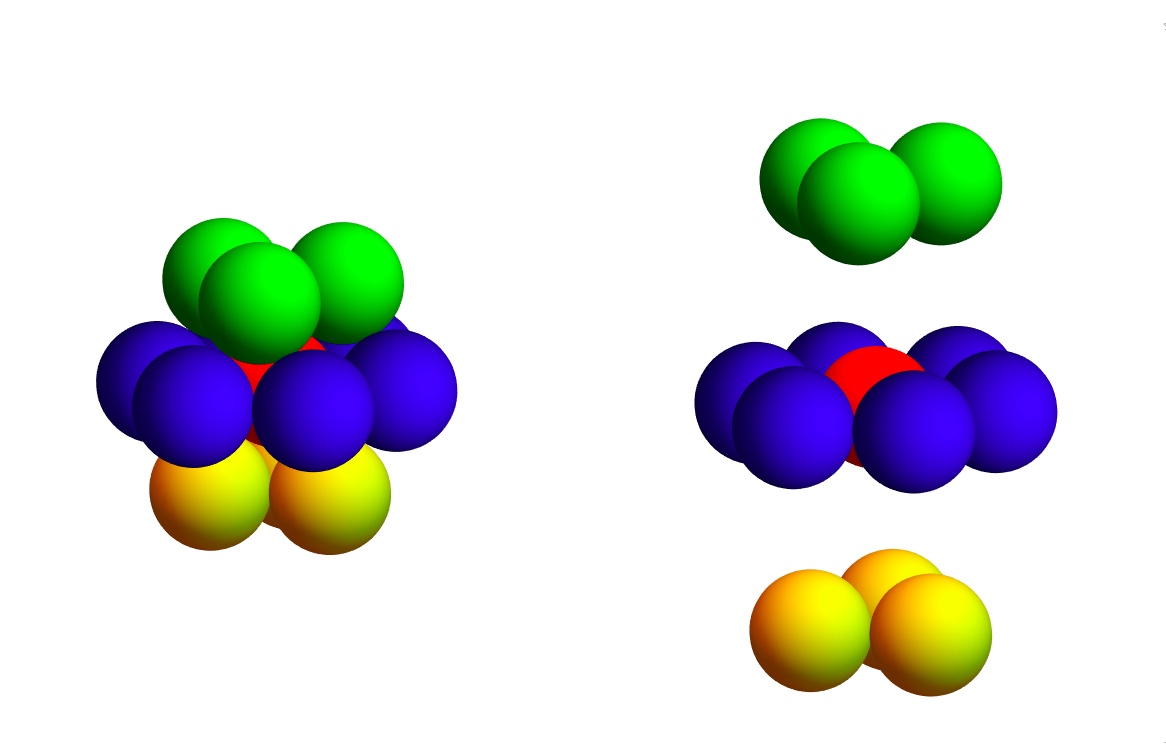}\caption{FCC cluster (left) and its layers (right)}
\label{FCC}
\end{figure}

\begin{figure}[H]
\centering
\includegraphics[scale=0.34]{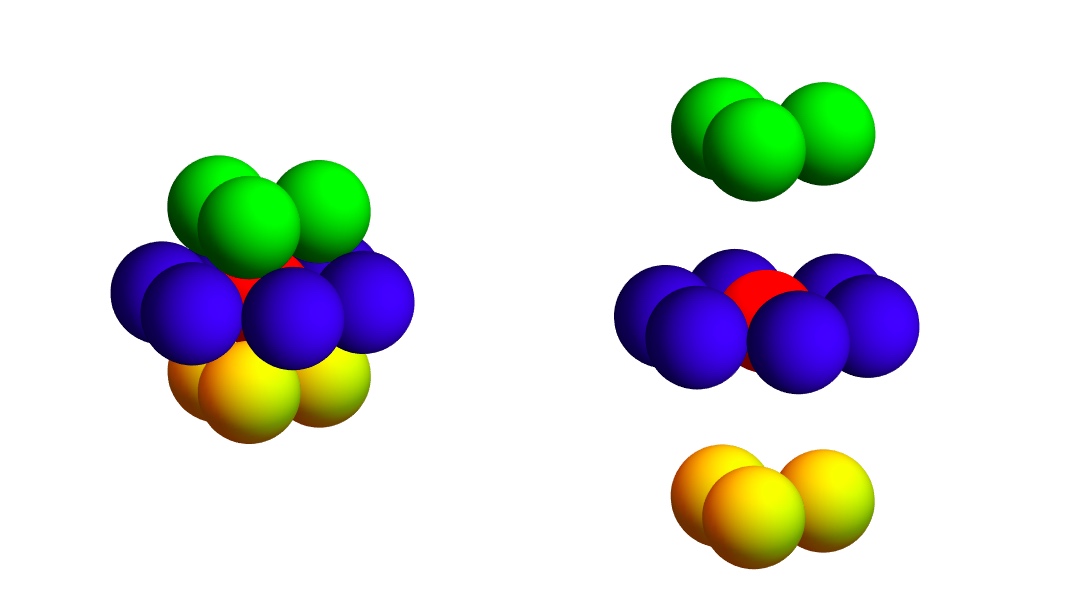}\caption{HCP cluster (left) and its layers (right)}
\label{HCP}
\end{figure}

The paper \cite{KKLS} contains detailed explanation of the fact that both FCC and HCP can be unlocked.

\vskip .2cm
The fact that the cluster FCC can be unlocked is mentioned in Chapter VII, \S $\,$2 in \cite{T} and is used, for example, in \cite{CS}, Appendix to Ch.
1. There it is built on the Coxeter constructions, see the book \cite{C}. The visualisation of this unlocking procedure can be also read out from the
movement of Buckminster Fuller's \textquotedblleft jitterbug\textquotedblright\ \cite{BF}, see the animation at https://www.youtube.com/watch?v=FfViCWntbDQ.
Note that the anima\-ted figure always has the symmetry group $\mathbb{A}_4$ although it is not immediately clear; this animation is related to the $\delta$-
rotation process discussed in Section \ref{delrotp} below.

\vskip .2cm
Chapter VII, \S $\,$2 of \cite{T} contains also a claim (without proof) that the HCP cluster is rigid: ``Dagegen ist die andere doppelwabenartige Anordnung
stabil". But in fact the HCP cluster can be unlocked as well, see \cite{KKLS} for a proof.

\vskip .2cm
As an indirect illustration of these claims one can consider the following triangulations of the FCC and HCP polyhedrons (the first one is taken from
\cite{C}). It is easy to see that both of them have the combinatorial type of the icosahedron.

\begin{figure}[H]
\centering
\vskip .4cm
\includegraphics[scale=0.22]{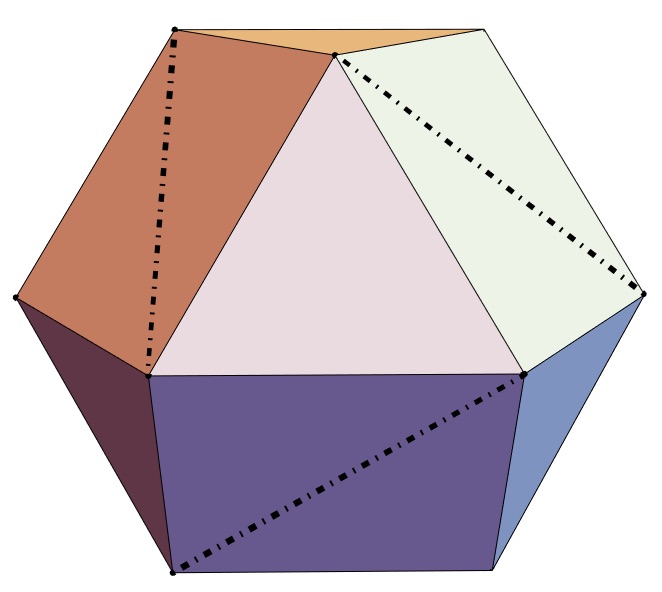}
\vskip .4cm
\caption{FCC triangulation}
\label{FCC-tri}
\end{figure}

\vspace{.6cm}
\begin{figure}[H]
\centering
\vskip .4cm
\includegraphics[scale=0.24]{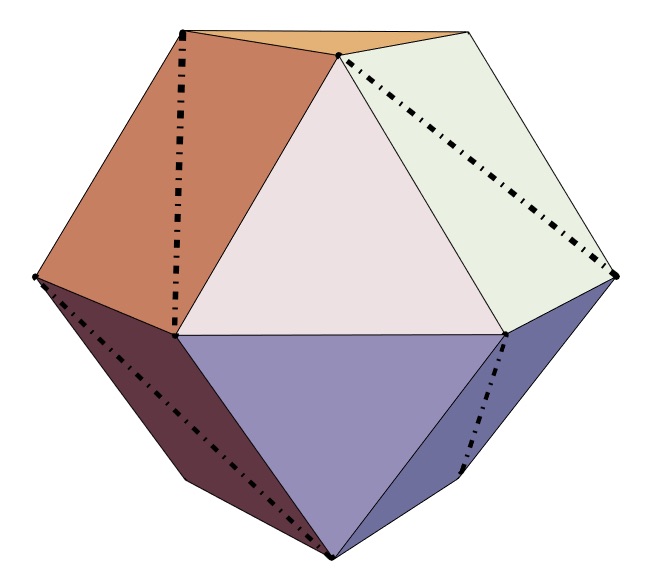}
\vskip .4cm
\caption{HCP triangulation}
\label{HCP-tri}
\end{figure}

It is interesting to note that if $G^{\prime}\left(  t\right)  ,G^{\prime \prime}\left(  t\right)  $ are two smooth unlocking deformations of FCC then the tangent vectors
to the paths $G^{\prime}\left(  t\right) ,G^{\prime\prime}\left(  t\right)  $ at $t=0$, i.e. at the point FCC coincide (up to a scalar factor); the same holds for HCP.
In words, that means that there is just one single vector, along which one can unlock the cluster FCC (and HCP). See \cite{KKLS} for details.

\vskip .2cm
The unlocking deformation for FCC can be chosen in such a way that 6 out of 12 balls do not move. For HCP the minimal number of fixed balls during the
unlocking deformation is 3.

\section{Critical clusters of cylinders}
We denote by $\mathcal{C}_{L}$ the space, modulo $SO(3)$, of clusters of $L$ infinite,
right, circular congruent cylinders, by $\mathcal{C}_{L}(r)$ the level subspace of $\mathcal{C}_{L}$ consisting of clusters of cylinders of radius $r$,
and by $\mathcal{C}_{L}(\geq r)$ the subspace of $\mathcal{C}_{L}$ consisting of clusters of cylinders of radius $\geq r$.

\subsection{Clusters of four cylinders \label{four}}

Consider the `manifold' $\mathcal{C}_{4}(r)$ of clusters of four cylinders of radius $r.$ At the value $r=1$ we find a critical cluster comprised by two
vertical cylinders interlaced with two horizontal cylinders. It can be visualized by removing two vertical cylinders on Figure \ref{octahedrConfCyl}.
It is easy to see that the resulting cluster can be unlocked, so it is a saddle.

\vskip .2cm
The level manifold $\mathcal{C}_{4}\left(  1+\sqrt{2}\right)$ (consisting of clusters of 4 cylinders of radius $r=1+\sqrt{2},$) contains a cluster of 4
parallel cylinders:

\vspace{.4cm}
\begin{figure}[H]
\centering
\includegraphics[scale=0.16]{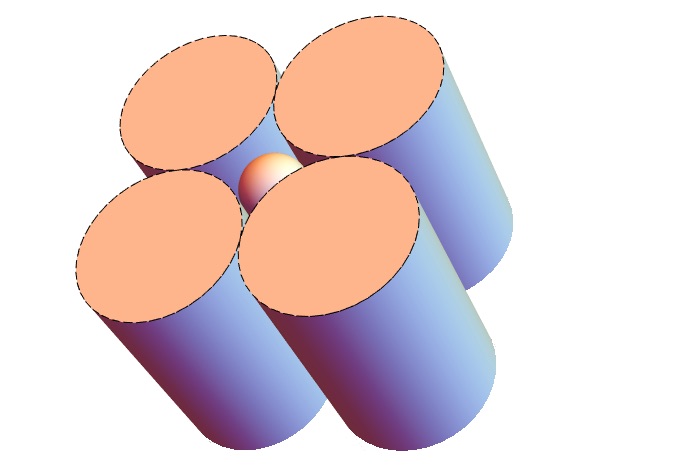}\caption{Initial
position}
\label{four parallel cylinders}
\end{figure}

This cluster is flexible:

\vspace{.4cm}
\begin{figure}[H]
\centering
\includegraphics[scale=0.226]{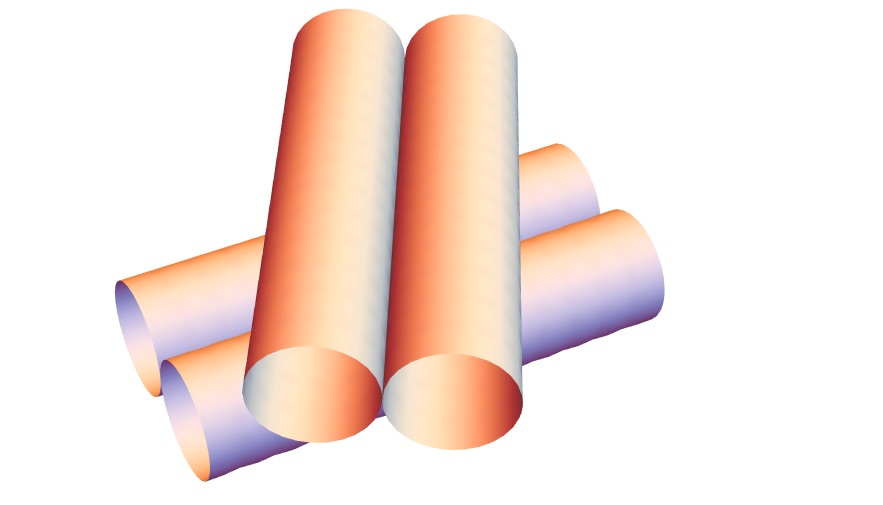}\caption{Motion of four
cylinders}
\label{four cylinders motion}
\end{figure}

We conjecture that the manifold $\mathcal{C}_{4}\left( 1+\sqrt{2}\right)$ has no other points, i.e. it is a circle $\operatorname{mod}\left(  SO\left(
3\right)  \right)  ,$ while the levels $\mathcal{C}_{4}\left(  r\right)$ with $r>1+\sqrt{2}$ are empty.

\subsection{Clusters of six cylinders}
The six cylinders case (and the corresponding `manifold' $\mathcal{C}_{6}$) turns out to be even more interesting.

\vskip .2cm
The question: - How many non-intersecting unit right circular cylinders of infinite length can touch a unit ball? \noindent- was asked by W.
Kuperberg, \cite{K}.

\vskip .3cm
Kuperberg presented several arrangements of clusters of 6 unit cylinders; it is difficult to imagine that there are clusters of 7 unit
cylinders, though no proof of this statement is known; see \cite{HS} for the proof that 8 unit non-intersecting cylinders of infinite length cannot touch
the unit ball.

\vskip .2cm
The two pictures below might suggest that the cluster $C_{6}$ of six parallel unit cylinders is maximal and flexible, and that the clusters of 6 cylinders
with $r>1$ do not exist:

\vspace{0.6cm}
\begin{figure}[H]
\centering
\includegraphics[scale=0.2]{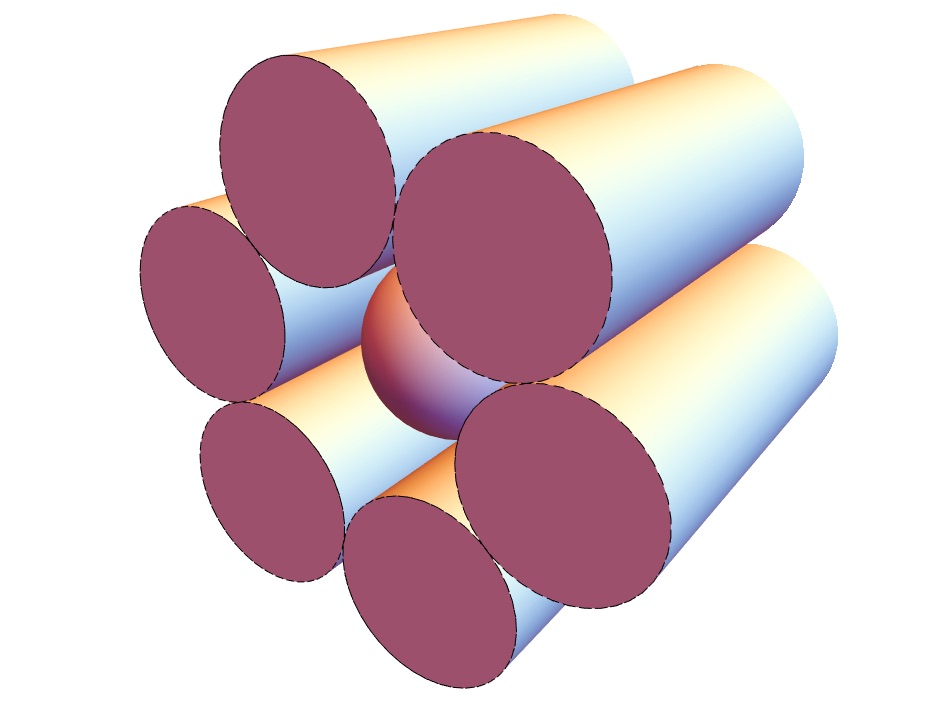}
\vskip .2cm
\caption{Cluster $C_{6}$}
\label{confC6}
\end{figure}

\vspace{0.2cm} \begin{figure}[H]
\centering
\includegraphics[scale=0.36]{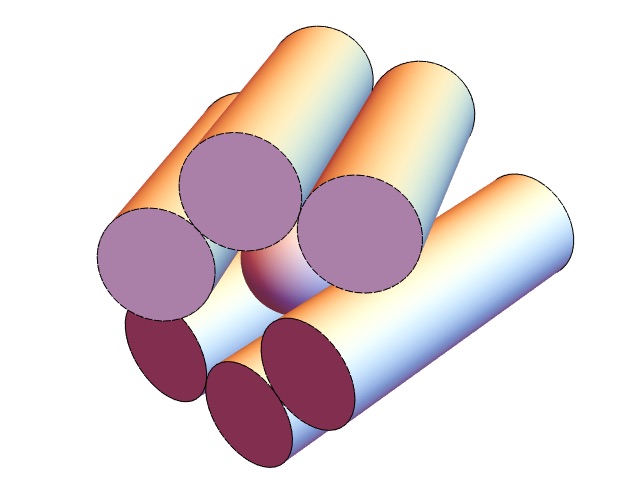}
\vskip .2cm
\caption{Non-rigidity
of $C_{6}$}
\label{nonrigconfC6}
\end{figure}

This, however, is not the case, and an example was presented by M. Firsching in his thesis, \cite{F}. In his example the radius $r$ of cylinders equals
$1.049659.$ This example was obtained by a numerical exploration of the corresponding 18-dimensional configuration manifold.

\vskip .2cm
One of the main results of our paper \cite{OS1} claims that in fact the cluster $C_{6}$ is critical;  -- in other words, it can be unlocked. To explain
this statement we first introduce notations, borrowed from \cite{OS1}.

\vskip .2cm
Let $\mathbb{S}^{2}\subset\mathbb{R}^{3}$ be the unit sphere, centered at the origin. For every $x\in\mathbb{S}^{2}$ by $TL_{x}$ we denote the set of all
(unoriented) tangent lines to $\mathbb{S}^{2}$ at $x.$ The manifold of tangent
lines to $\mathbb{S}^{2}$ we denote by $M$, and we represent a point in $M$ by
a pair $\left(  x,\tau\right)  $, where $\tau$ is a unit tangent vector to $\mathbb{S}^{2}$ at $x,$ though such a pair is not unique: the pair $\left(
x,-\tau\right)  $ is the same point in $M.$ We shall use the following coordinates on $M$. Let $\mathbf{x,y,z}$ be the standard coordinate axes in
$\mathbb{R}^{3}$. Let $R_{\mathbf{x}}^{\alpha},R_{\mathbf{y}}^{\alpha}$ and $R_{\mathbf{z}}^{\alpha}$ be the counterclockwise rotations about these axes
by an angle $\alpha$, viewed from the tips of axes. We call the point $\mathsf{N}=\left(  0,0,1\right)  $ the North pole, and $\mathsf{S}=\left(
0,0,-1\right)  $ -- the South pole. By \textit{meridians} we mean geodesics on $\mathbb{S}^{2}$ joining the North pole to the South pole. The meridian in the
plane $\mathbf{xz}$ with positive $\mathbf{x}$ coordinates will be called Greenwich. The angle $\varphi$ will denote the latitude on $\mathbb{S}^{2},$
$\varphi\in\left[  -\frac{\pi} {2},\frac{\pi}{2}\right]  ,$ and the angle $\varkappa\in\lbrack0,2\pi)$ -- the longitude, so that Greenwich corresponds
to $\varkappa=0.$ Every point $x\in\mathbb{S}^{2}$ can be written as $x=\left(  \varphi_{x},\varkappa_{x}\right)  .$ Finally, for each
$x\in\mathbb{S}^{2}$, we denote by $R_{x}^{\alpha}$ the rotation by the angle $\alpha$ about the axis joining $\left(  0,0,0\right)  $ to $x,$
counterclockwise if viewed from its tip, and by $\left(  x,\uparrow\right)$ we denote the pair $\left(  x,\tau_{x}\right)  ,$ $x\neq\mathsf{N,S,}$ where
the vector $\tau_{x}$ points to the North. We also abbreviate the notation $\left(  x,R_{x}^{\alpha} \uparrow\right)  $ to $\left(  x,\uparrow_{\alpha
}\right)  $.

\vskip .2cm
Let $u=\left(  x^{\prime},\tau^{\prime}\right)  ,$ $v=\left( x^{\prime\prime},\tau^{\prime\prime}\right)  $ be two lines in $M$. We denote
by $d_{uv}$ the distance between $u$ and $v$; clearly $d_{uv}=0$ iff $u\cap v\neq\varnothing.$ If the lines $u,v$ are not parallel then the square of
$d_{uv}$ is given by the formula
\[ d_{uv}^{2}=\frac{\det^{2}[\tau^{\prime},\tau^{\prime\prime},x^{\prime\prime
}-x^{\prime}]}{1-(\tau^{\prime},\tau^{\prime\prime})^{2}}\ ,\]
where $(\ast,\ast)$ is the scalar product. The cylinders $C_{u}\left( r\right)  $ and $C_{v}\left(  r\right)$, touching $\mathbb{S}^{2}$ at
$x^{\prime},x^{\prime\prime},$ having directions $\tau^{\prime},\tau^{\prime\prime},$ and radius $r,$ touch each other iff
\begin{equation} r=\frac{d_{uv}}{2-d_{uv}}\ . \label{11} \end{equation}
Indeed, when the cylinders touch each other, we have the proportion:
\begin{equation} \frac{d}{1}=\frac{2r}{1+r}\ . \label{dia}\end{equation}

We denote by $M^{6}$ the manifold of 6-tuples
\begin{equation} \mathbf{m}=\left\{  u_{1},...,u_{6}:u_{i}\in M,i=1,...,6\right\}  . \label{25} \end{equation}
We will study the function $D$ on $M^{6}$:
\begin{equation} D\left(  \mathbf{m}\right)  =\min_{1\leq i<j\leq 6}d_{u_{i}u_{j}}\ . \label{41}\end{equation}
We are especially interested in knowing its maximum, since it defines, via $\left(  \ref{11}\right)  ,$ the maximum radius of 6 non-intersecting equal
cylinders touching the unit ball.

\vskip .2cm
The generators of the cylinders in $C_{6}$ touching the ball define a point in $M^{6}$, shown on Figure \ref{confC6tan}. We denote it by
the same symbol $C_{6}$. Note that $D\left(  C_{6}\right)  =1$. \vspace{1cm}

\begin{figure}[H]
\vspace{-.6cm} \centering
\includegraphics[scale=0.2]{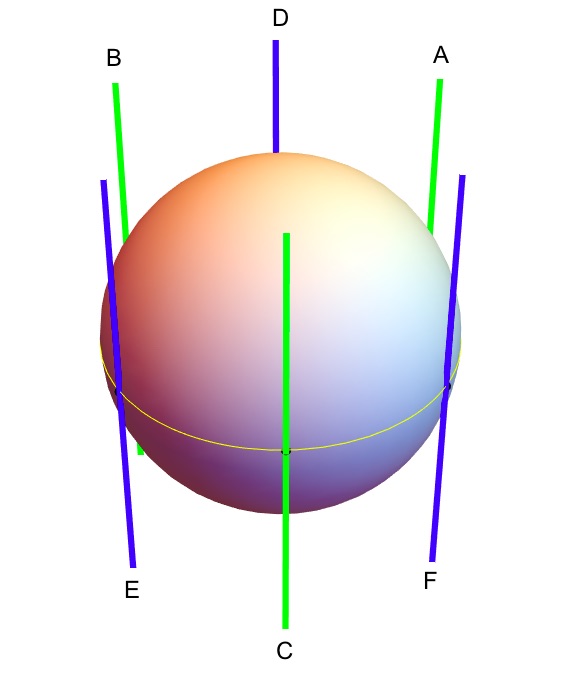}\caption{Cluster
$C_{6}$ of tangent lines}
\label{confC6tan}
\end{figure}

Now we are in the position to describe the `good' clusters $\mathbf{m}$ with high values of the function $D\left(  \mathbf{m}\right)  .$ We obtain them by
deforming the cluster $C_{6},$ which in our notation can be written as
\[ \begin{array}[c]{ll}
C_{6} & \equiv C_{6}\left(  0,0,0\right)  =\left\{  \left[  \left( 0,\frac{\pi}{6}\right)  ,\uparrow\right]  ,\left[  \left(  0,\frac{\pi}
{2}\right)  ,\uparrow\right]  ,\left[  \left(  0,\frac{5\pi}{6}\right) ,\uparrow\right]  ,\right. \\[0.8em]
& \hspace{3cm}\left.  \left[  \left(  0,\frac{7\pi}{6}\right)  ,\uparrow \right]  ,\left[  \left(  0,\frac{3\pi}{2}\right)  ,\uparrow\right]  ,\left[
\left(  0,\frac{11\pi}{6}\right)  ,\uparrow\right]  \right\}  .
\end{array}\]

Namely, we will explore the 6-tuples $C_{6}\left(  \varphi,\delta,\varkappa\right)  $, of the form
\begin{equation}\label{defconf}
\begin{array}[c]{ll}
& C_{6}\left(  \varphi,\delta,\varkappa\right)  = \left\{  A=\left[  \left( \varphi,\frac{\pi}{6}-\varkappa\right)  ,\uparrow_{\delta}\right]  ,D=\left[
\left(  -\varphi,\frac{\pi}{2} +\varkappa\right)  ,\uparrow_{\delta}\right],\right. \\[.8em]
& \hspace{2.74cm} B=\left[  \left(  \varphi,\frac{5\pi}{6}-\varkappa\right) ,\uparrow_{\delta}\right]  ,E=\left[  \left(  -\varphi,\frac{7\pi}
{6}+\varkappa\right)  ,\uparrow_{\delta}\right]  ,\\[.8em]
& \hspace{2.74cm} \left.  C=\left[  \left(  \varphi,\frac{3\pi}{2} -\varkappa\right)  ,\uparrow_{\delta}\right]  ,F=\left[  \left(
-\varphi,\frac{11\pi} {6}+\varkappa\right)  ,\uparrow_{\delta}\right]\right\}  .\end{array}\end{equation}
In words, the three points $\left[  \left(  0,\frac{\pi}{6}\right),\uparrow\right]  ,\left[  \left(  0,\frac{5\pi}{6}\right)  ,\uparrow\right]
$ and $\left[  \left(  0,\frac{3\pi}{2}\right)  ,\uparrow\right]  $ go upward by $\varphi,$ then `horizontally' by $-\varkappa,$ and then the three vectors
$\uparrow$ are rotated by $\delta,$ while the three remaining points go downward by $\varphi$, then `horizontally' by $\varkappa,$ and, finally, the
three vectors $\uparrow$ are rotated by $\delta$.

\vskip .2cm
For all $\varphi,\delta,\varkappa$ these clusters possess $\mathbb{D}_{3}\equiv\mathbb{Z}_{3}\times\mathbb{Z}_{2}$ symmetry. The group
$\mathbb{D}_{3}$ is generated by the rotations $R_{\mathbf{z}}^{120^{\circ}}$ and $R_{\mathbf{x}}^{180^{\circ}}.$ We denote by $\mathcal{C}^{3}\in M^{6}$
the 3-dimensional submanifold formed by 6-tuples $\left(  \ref{defconf} \right)  $.

\vskip .2cm
We claim that there exists a curve $\gamma$ in the manifold $\mathcal{C}^{3}$,
\begin{equation} \gamma(\varphi)=C_{6}\bigl(\varphi,\delta\left(  \varphi\right)
,\varkappa\left(  \varphi\right)  \bigr)\ ,\ \varphi\in\left[  0;\frac{\pi}
{2}\right]  \ , \label{curvegamma} \end{equation}
which starts at $C_{6}\left(  0,0,0\right)  $ for $\varphi=0$,
\begin{equation} \gamma(0)=C_{6}\left(  0,0,0\right)  \ , \label{curvegamma2} \end{equation}
such that the function $D\bigl(\gamma(\varphi)\bigr)$ is unimodal on $\gamma$, with maximum value $\sqrt{\frac{12}{11}}$, which corresponds to the value
\begin{equation} r_{\mathfrak{m}}=\frac{1}{8}\left(  3+\sqrt{33}\right)  \approx1.093070331\ .\label{30}\end{equation}
of the radii of the touching cylinders. This is summarized by the main result of our paper \cite{OS1}.

\begin{theorem}
\label{Main} The cluster $C_{6}\left(  0,0,0\right)  $ can be unlocked. Moreover,

\vskip.2cm \textbf{i. }There is a continuous curve $\gamma$, see (\ref{curvegamma}) and (\ref{curvegamma2}), on which the function
$D\bigl(\gamma(\varphi)\bigr)$ increases for $\varphi\in\left[  0,\varphi_{\mathfrak{m}}\right]  $ and decreases for $\varphi>\varphi_{\mathfrak{m}},$
with $\varphi_{\mathfrak{m}}=\arcsin\sqrt{\frac{3}{11}}.$ The explicit description of $\gamma$ is given by the relations (\ref{trajphi0})-(\ref{trajkappa}) below.

\vskip .3cm \textbf{ii.} At the point $\varphi_{\mathfrak{m}},\delta_{\mathfrak{m}} =\delta\left(  \varphi_{\mathfrak{m}}\right)  ,\varkappa
_{\mathfrak{m} }=\varkappa\left(  \varphi_{\mathfrak{m}}\right)$ we have
\[ D\Bigl(  C_{6}\left(  \varphi_{\mathfrak{m}},\delta_{\mathfrak{m}}
,\varkappa_{\mathfrak{m}}\right)  \Bigr)  =\sqrt{\frac{12}{11}}\ ,\]
so the radius of the corresponding cylinders is equal to
\[  r_{\mathfrak{m}}=\frac{1}{8}\left(  3+\sqrt{33}\right)  .\]

\end{theorem}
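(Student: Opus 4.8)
The idea is to turn this into elementary single--variable calculus by first using the $\mathbb{D}_3$--symmetry of the family $C_6(\varphi,\delta,\varkappa)$ from (\ref{defconf}) to collapse the $\binom 62=15$ mutual distances $d_{u_iu_j}$ of (\ref{41}) down to only four functions of $(\varphi,\delta,\varkappa)$, and then writing the curve $\gamma$ explicitly and analysing $D$ along it.

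First I would fix coordinates: write each $x_i=x_i(\varphi,\varkappa)\in\mathbb S^2$ and each unit tangent $\tau_i=\tau_i(\varphi,\delta,\varkappa)$ from (\ref{defconf}) in Cartesian form and substitute into the formula for $d_{uv}^2$ stated just above (\ref{11}). Under the $\mathbb Z_3$ generated by $R_{\mathbf z}^{120^\circ}$ the six lines fall into the two triples $\{A,B,C\}$ (tangency latitude $+\varphi$) and $\{D,E,F\}$ (latitude $-\varphi$), interchanged by $R_{\mathbf x}^{180^\circ}$; hence there are only four distinct distances --- the intra--triple one $d_1=d_{AB}$ (occurring $6$ times) and the inter--triple ones $d_2=d_{AD}$, $d_3=d_{AF}$, $d_4=d_{AE}$ (each $3$ times). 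At $C_6(0,0,0)$ the six lines are vertical and tangent at equatorial points $60^\circ$ apart, giving $d_1=\sqrt3$, $d_2=d_3=1$, $d_4=2$, so $D(C_6)=1$ and the bottleneck is exactly the six ``adjacent'' inter--triple pairs.

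Next I would construct $\gamma$ as the ``ridge'': for each small $\varphi$, let $(\delta(\varphi),\varkappa(\varphi))$ maximise $D\bigl(C_6(\varphi,\cdot,\cdot)\bigr)$. Since $d_1,d_4$ start strictly above $1$, near $C_6$ one has $D=\min(d_2,d_3)$, so at the maximiser $d_2=d_3$ (else the smaller value could be raised) together with a criticality condition in the complementary direction: two equations for $(\delta,\varkappa)$, which the implicit function theorem turns into a smooth curve once the transverse Hessian of $\min(d_2,d_3)$ at $\varphi=0$ is checked to be nondegenerate. Solving these equations yields the closed forms (\ref{trajphi0})--(\ref{trajkappa}); alternatively one may simply posit those formulas and verify directly that along them $d_2=d_3$ and the transversal criticality hold. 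Substituting $\delta=\delta(\varphi)$, $\varkappa=\varkappa(\varphi)$ into the formula for $d_2$ then expresses $D(\gamma(\varphi))^2=d_2(\gamma(\varphi))^2$ as an explicit algebraic function of $\varphi$ --- in practice a rational function of $\sin^2\varphi$ --- whose derivative vanishes exactly at $\sin^2\varphi=3/11$, i.e. at $\varphi=\varphi_{\mathfrak{m}}=\arcsin\sqrt{3/11}$, where the value is $12/11$. This gives $D(\gamma(\varphi_{\mathfrak{m}}))=\sqrt{12/11}$, and then (\ref{11}) with $r=D/(2-D)$ gives $r_{\mathfrak{m}}=\sqrt3/(\sqrt{11}-\sqrt3)=(3+\sqrt{33})/8$, which is part \textbf{ii}. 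For part \textbf{i} and the unlocking statement it then remains to note that $D(\gamma(\varphi))$ is unimodal on $[0,\pi/2]$ and, in particular, $D(\gamma(\varphi))>1$ for $\varphi\in(0,\varphi_{\mathfrak{m}}]$; hence, keeping the cylinder radius fixed at $1$, the cluster of radius--$1$ cylinders attached (via (\ref{11})) to $\gamma(\varphi)$ has all pairwise gaps positive while each cylinder still touches $B$, so $\gamma$ is an unlocking deformation of $C_6(0,0,0)$.

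The main obstacle is this last point: the \emph{global} one--variable analysis of $D$ along $\gamma$ over the whole interval $[0,\pi/2]$. One must show $D(\gamma(\cdot))$ is strictly increasing on $[0,\varphi_{\mathfrak{m}}]$ and strictly decreasing afterwards, and --- crucially --- that no ``mode switch'' occurs, i.e. that $d_1(\gamma(\varphi))\ge d_2(\gamma(\varphi))$ and $d_4(\gamma(\varphi))\ge d_2(\gamma(\varphi))$ throughout, so that $D=\min_i d_i$ really coincides with the function $d_2$ we are tracking (and one must also control the degenerate endpoint $\varphi=\pi/2$, where the two triples collapse to the poles). These reduce to finitely many inequalities between explicit functions, but carrying them out cleanly rather than by brute force is where the work lies; the nondegeneracy check that makes $\gamma$ a genuine smooth curve near $\varphi=0$ is a smaller computation of the same flavour.
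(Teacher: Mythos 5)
Your skeleton is the same as the one the paper (following \cite{OS1}) uses: restrict to the $\mathbb{D}_3$-symmetric family (\ref{defconf}), use the symmetry to cut the fifteen pairwise distances down to four functions $d_1,\dots,d_4$, exhibit the explicit curve (\ref{trajphi0})--(\ref{trajkappa}), convert via (\ref{11}), and get unlocking from $D>1$ along the curve; your values at $C_6$ and the arithmetic $r_{\mathfrak m}=\sqrt3/(\sqrt{11}-\sqrt3)=(3+\sqrt{33})/8$ are correct. But the step on which you build both the construction of $\gamma$ and your bookkeeping --- ``since $d_1,d_4$ start strictly above $1$, near $C_6$ one has $D=\min(d_2,d_3)$'' --- is false. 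The distance between two infinite lines (hence between the corresponding cylinders) is only upper semicontinuous at parallel pairs, and \emph{all} fifteen pairs in $C_6$ are parallel: an arbitrarily small tilt can make a distance drop by a finite amount, so no neighborhood argument at $C_6$ lets you discard $d_1$ and $d_4$. This is not hypothetical. Writing $\tau_z=\cos\delta\cos\varphi$ one finds for the intra-triple pairs
\[ d_1^{\,2}=\frac{12\cos^4\!\varphi\,\sin^2\!\delta\cos^2\!\delta}{\bigl(1-\tau_z^2\bigr)\bigl(1+3\tau_z^2\bigr)}\ ,\]
and along (\ref{trajphi0})--(\ref{trajdelta0}) one has $\cos^2\delta\cos^2\varphi=x$, whence $d_1^{\,2}=\frac{12x}{4x^2+7x+1}$ exactly. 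Thus $d_1\to1$ (not $\sqrt3$) as $x\to1$ along $\gamma$, $d_1>1$ only for $x\in(\tfrac14,1)$, and at $x=\tfrac12$ one gets $d_1=\sqrt{12/11}$ exactly: the pairs you treat as slack are binding all along $\gamma$, and the record value $\sqrt{12/11}$ is precisely the maximum of $d_1$ on $\gamma$.

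Consequently your ridge construction (maximize $\min(d_2,d_3)$ over $(\delta,\varkappa)$ at fixed $\varphi$, impose $d_2=d_3$ plus transversal criticality, invoke the implicit function theorem) is unjustified and in general will not reproduce $\gamma$: ignoring $d_1$ one can raise $\min(d_2,d_3)$ at the expense of $d_1$, so the curve produced that way need not maximize $D$, and the identification $D=d_2$ along it can fail. Your fallback --- posit (\ref{trajphi0})--(\ref{trajkappa}) and verify --- does work, but the verification differs from what you describe: all four distance functions must be tracked on an equal footing along $\gamma$ (one finds the coincidence $d_1=d_2=d_3$ with $d_4$ above it), and the crucial fact for unlocking, namely $D(\gamma)>1$ strictly for small positive deformation, must come out of these explicit formulas rather than from continuity of $d_1,d_4$ at the parallel cluster --- that discontinuity is exactly why the six-cylinder problem is delicate. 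A smaller point: with (\ref{trajphi0}) the latitude $\varphi$ is not monotone in $x$ and never reaches $\pi/2$, so organizing the global monotonicity statement by $\varphi\in[0,\pi/2]$, including your proposed endpoint analysis there, needs care; the natural parameter for the unimodality argument is $x$, for which $D^2=\frac{12x}{4x^2+7x+1}$ is visibly unimodal with maximum $\frac{12}{11}$ at $x=\tfrac12$.
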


\vskip .2cm
The record cluster $C_{\mathfrak{m}}=C_{6}\left(  \varphi_{\mathfrak{m}},\delta_{\mathfrak{m}},\varkappa_{\mathfrak{m}}\right)  $ is
shown on Figure \ref{record1} below.

\vskip .2cm
There is an animation, on the page of Yoav Kallus \cite{Ka}, demonstrating the motion of the cluster of 6 cylinders along the curve $\gamma(\varphi)$.

\begin{figure}[H]
\centering
\includegraphics[scale=0.22]{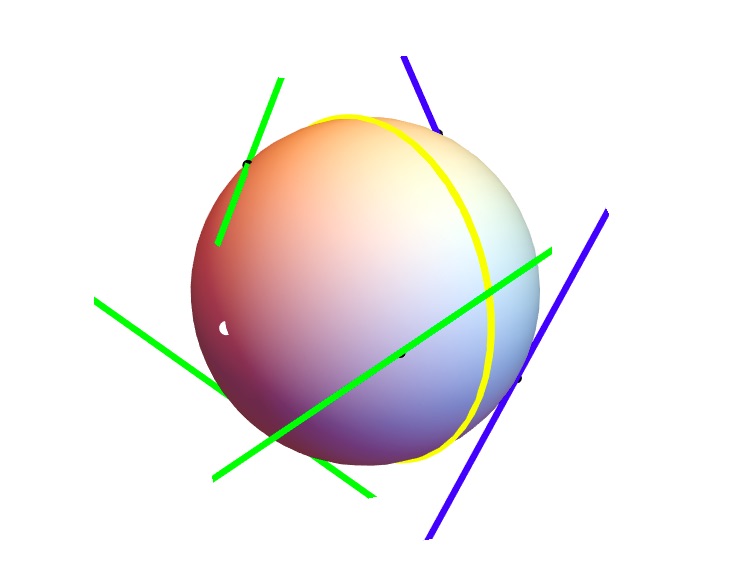}
\caption{Record cluster, the equator is yellow, the north pole is white}
\label{record1}
\end{figure}

\begin{figure}[H]
\centering
$\ \ \ $
\raisebox{.3cm}{\includegraphics[scale=0.71]{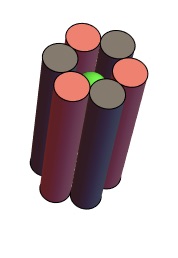}}$\ \ $
\includegraphics[scale=0.352]{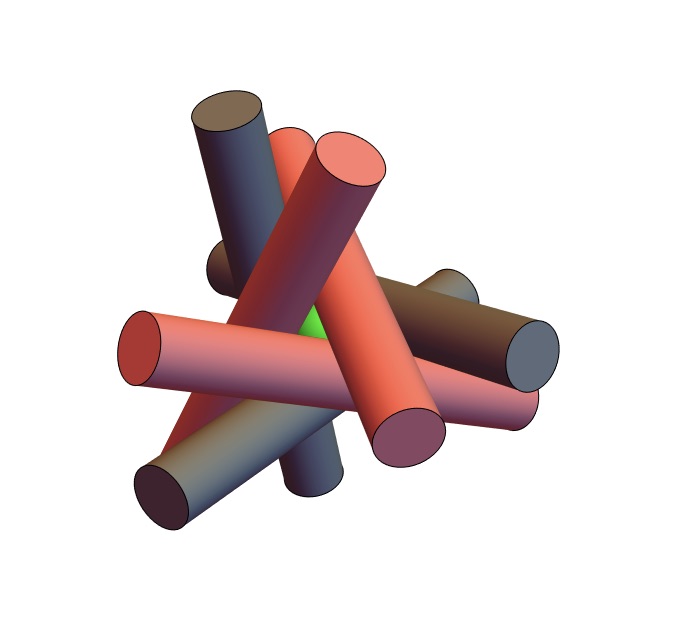} \parbox{11.8cm}{
\caption{Two clusters of cylinders: the cluster $C_6$ of six parallel cylinders of radius 1 (on the left) and the cluster $C_{\mathfrak{m}}$ of six cylinders of radius $\,\approx\! 1.0931$ (on the right) } }\end{figure}

\subsection{Rigidity of the cluster $C_{\mathfrak{m}}$}
The main result of the paper \cite{OS2} claims that the cluster $C_{\mathfrak{m}}$ is rigid. In other words, any small perturbation (apart of the global
rotation) of the line cluster shown on Figure \ref{record1} results in a smaller value of the function $D,$ defined in $\left(  \ref{41}\right)  .$

\begin{theorem}
\label{theorC6} The cluster $C_{\mathfrak{m}}\ $is a point of a sharp local maximum of the function $D$: for any point $\mathbf{m}$ in a vicinity of
$C_{\mathfrak{m}}$ we have
\[ D\left(  \mathbf{m}\right)  <\sqrt{\frac{12}{11}}=D\left(  C_{\mathfrak{m}}\right)\  .\]

\end{theorem}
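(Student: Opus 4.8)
The plan is to analyze the Hessian of $D$ at $C_{\mathfrak{m}}$, but since $D$ is only a \emph{minimum} of smooth functions $d_{u_iu_j}$, it is not itself smooth, and we must work with the ``active set'' of pairs realizing the minimum. First I would identify which of the $\binom{6}{2}=15$ pairwise distances $d_{u_iu_j}$ are equal to $D(C_{\mathfrak m})=\sqrt{12/11}$ at the record cluster; call this set of active pairs $\mathcal{A}$. By the $\mathbb{D}_3$ symmetry of $C_{\mathfrak m}$ (generated by $R_{\mathbf z}^{120^\circ}$ and $R_{\mathbf x}^{180^\circ}$), the set $\mathcal{A}$ is a union of $\mathbb{D}_3$-orbits of pairs, which sharply restricts its possible combinatorial types; I expect $\mathcal{A}$ to be a single orbit of size $6$ (a ``hexagonal'' cycle of contacts) or a union of two orbits. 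For a direction of perturbation $\xi$ in the $12$-dimensional tangent space $T_{C_{\mathfrak m}}M^6$, the one-sided directional derivative of $D$ is $\min_{(i,j)\in\mathcal A}\, D d_{u_iu_j}(\xi)$, so the necessary first-order (criticality) condition — already guaranteed since $C_{\mathfrak m}$ lies on the curve $\gamma$ at its maximum, and in fact by Theorem~\ref{Main} — is that $0$ lies in the convex hull of the gradients $\{\nabla d_{u_iu_j}: (i,j)\in\mathcal A\}$; I would record the explicit convex combination (the ``stress'' coefficients $\lambda_{ij}>0$).

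Next, to get a \emph{strict} local maximum, I must rule out any perturbation direction $\xi$ along which $D$ does not strictly decrease to second order. Modulo the $SO(3)$ action (a $3$-dimensional orbit, corresponding to the zero eigenspace we factor out), there remains a $9$-dimensional space of genuine deformations; but we may exploit the $\mathbb D_3$-equivariance to decompose this space into isotypic components for $\mathbb D_3$ and treat each separately, as $D$'s second-order behavior respects the symmetry. Within the subspace $\mathcal C^3$ of $\mathbb D_3$-symmetric configurations (the $(\varphi,\delta,\varkappa)$ coordinates), Theorem~\ref{Main} already shows $D$ has a genuine interior maximum at $(\varphi_{\mathfrak m},\delta_{\mathfrak m},\varkappa_{\mathfrak m})$ along $\gamma$; I would upgrade this to: the restriction of $D$ to all of $\mathcal C^3$ (not just the curve $\gamma$) has a strict local max there, by computing the $2\times 2$ Hessian of $\min_{\mathcal A}d_{u_iu_j}$ transverse to $\gamma$ inside $\mathcal C^3$ — or, more robustly, by checking that for each active pair the quadratic form $D d_{u_iu_j}$ restricted to the kernel of the first-order constraints is negative, which forces $\min$ to be negative. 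For the non-symmetric isotypic components, the key point is that generic perturbations break some of the active contacts' equality: the linearized functionals $D d_{u_iu_j}(\xi)$, $(i,j)\in\mathcal A$, cannot all vanish simultaneously on a symmetry-breaking $\xi$ unless $\xi$ lies in a very small subspace, because the contact graph $(\{1,\dots,6\},\mathcal A)$ is (I expect) connected with enough edges that its associated rigidity-type matrix has corank only $3$ (the rotations). Whenever some $D d_{u_iu_j}(\xi)<0$ we are done to first order; on the remaining ``infinitesimally flexible'' directions — where all active $D d_{u_iu_j}(\xi)=0$ — I fall back on the second-order term, and here the stress coefficients $\lambda_{ij}$ enter: the relevant quadratic form is $\sum_{(i,j)\in\mathcal A}\lambda_{ij}\,\mathrm{Hess}\,d_{u_iu_j}(\xi,\xi)$, which I must show is negative definite on that subspace (a ``prestress stability'' / second-order rigidity argument in the spirit of Connelly).

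Concretely the computation reduces to the explicit formula $d_{uv}^2 = \det^2[\tau',\tau'',x''-x']/(1-(\tau',\tau'')^2)$: I would parametrize each line $u_i$ near its value in $C_{\mathfrak m}$ by two local coordinates (a displacement of the tangency point $x_i$ on $\mathbb S^2$ and a rotation of $\tau_i$), plug in, expand $d_{u_iu_j}^2$ to second order, and assemble the Hessian as an explicit $12\times12$ (equivalently, after using symmetry, a block-diagonal) matrix with entries algebraic in $\sqrt{3},\sqrt{11}$. Then verify: (a) the first-order stress relation $\sum\lambda_{ij}\nabla d_{u_iu_j}=0$ holds with all $\lambda_{ij}>0$; (b) $\sum\lambda_{ij}\mathrm{Hess}\,d_{u_iu_j}$ is negative semidefinite with kernel exactly the $3$-dimensional $\mathfrak{so}(3)$-orbit direction; and (c) on the subspace where all active first-order functionals vanish, the form is strictly negative. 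The main obstacle I anticipate is step (b)/(c): verifying negative-definiteness of a concrete but messy $9\times9$ (post-$SO(3)$-quotient) symmetric matrix with irrational algebraic entries. This is where I would lean hardest on the $\mathbb D_3$-isotypic decomposition to split it into small blocks — a trivial-representation block of size $3$ (where Theorem~\ref{Main}'s analysis of $\mathcal C^3$ already does most of the work), a sign-representation block, and a $2$-dimensional-representation block (appearing with some multiplicity) — and check negativity of each small block's eigenvalues separately, either by hand with exact arithmetic or by exhibiting an explicit Cholesky-type factorization. An alternative route that avoids the stress argument entirely, if the active contact graph turns out to be rigid enough, is to show that the active functionals $\{D d_{u_iu_j}(\xi)\}_{(i,j)\in\mathcal A}$ already span a space whose only common zero in $T_{C_{\mathfrak m}}M^6/\mathfrak{so}(3)$ is $\{0\}$, i.e.\ $C_{\mathfrak m}$ is infinitesimally rigid with respect to the active contacts; then first-order analysis alone gives the strict maximum and the Hessian is unnecessary — but I suspect the $\delta$-rotation freedom prevents this, so the prestress-stability argument is the likely necessary path.
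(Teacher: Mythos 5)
Your overall architecture is the right one, and it matches the machinery the paper actually uses (Theorem \ref{T} with $k=1$, i.e.\ the criterion of \cite{OS2}): identify the active pairs, exhibit a strictly convex relation $\sum\lambda_{ij}l_{ij}=0$ among their differentials, and show that the stressed quadratic form $\sum\lambda_{ij}q_{ij}$ is negative definite on the common kernel $E$ of the active differentials; the $\mathbb{D}_3$-isotypic block decomposition is also how one would organize the exact algebra. But there is a genuine gap at the step where you assemble the conclusion: your synthesis is ray-by-ray (``whenever some $Dd_{u_iu_j}(\xi)<0$ we are done to first order; on the remaining directions I fall back on the second-order term''). The paper warns, in Note i) immediately after this very theorem, that exactly this inference is invalid for minima of smooth functions: for $f=\min\{-y+3x^2,\,y-x^2\}$ the origin is a strict maximum along every ray, yet $f>0$ inside the ``horns'' $x^2<y<3x^2$, so the origin is not a local maximum. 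The trouble is the lack of uniformity in ``for $t$ small enough'' as the direction $\xi$ approaches the degenerate subspace, and your items (a)--(c) do not address it. The paper closes this gap with a dedicated argument: restrict to the manifold $\mathcal{E}$ where all active functions are equal (Proposition \ref{lq1}: there the restriction is smooth and its Hessian is precisely $\sum\lambda^{u}q_{u}$, negative definite on $E$), and prove a uniform linear decrease estimate off $\mathcal{E}$ in a whole tubular neighborhood (Lemma \ref{prele}, resting on the compactness/continuity statement of Lemma \ref{lemobse}), so that the two regimes are glued with constants independent of the direction. Your appeal to prestress stability ``in the spirit of Connelly'' could in principle play this role, but it is not off-the-shelf here: the configuration space is the manifold of $6$-tuples of tangent lines to $\mathbb{S}^2$ with line-to-line distances, not a bar-joint or tensegrity framework, so one must prove the relevant combination theorem in this setting --- which is exactly what Theorem \ref{T} and its proof do; simply naming the principle does not supply the argument.

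Two smaller points. First, your item (b), negative semidefiniteness of $\sum\lambda_{ij}\mathrm{Hess}\,d_{u_iu_j}$ on the whole tangent space with kernel only the $SO(3)$ directions, is stronger than what is needed and is almost certainly false here; the criterion only requires negativity on $E$, and the paper's Remark (the $4$-dimensional $L_{quadr}$, with linear decay outside it) indicates that outside $E$ the linear terms, not the stressed Hessian, control the behavior. If (b) did hold it would give a quick global certificate, but you should not plan on it. Second, your guess that the active set is one orbit of size $6$ (or two orbits) undercounts: with a single convex relation and null-index $4$ modulo $SO(3)$, dimension counting in the $18$-dimensional tangent space forces on the order of a dozen active pairs, i.e.\ several $\mathbb{D}_3$-orbits; this does not change the strategy but does change the bookkeeping.
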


\begin{remark}
There exists a 4-dimensional subspace $L_{quadr}$ in the tangent space of $M^{6}$ at $C_{\mathfrak{m}},$ such that for any $l\in L_{quadr}$ we have
\[ -c_{u}\left\Vert l\right\Vert t^{2}\leq D\left(  C_{\mathfrak{m}}+tl\right) -D\left(  C_{\mathfrak{m}}\right)  \leq-c_{d}\left\Vert l\right\Vert t^{2}\]
for $t$ small enough. Here $c_{d}$ and $c_{u}$ are some constants, $0<c_{d}\leq c_{u}<+\infty$ and $C_{\mathfrak{m}}+tl\in M^{6}$ stands for the
exponential map applied to the tangent vector $tl$.

For each tangent vector $l$ outside $L_{quadr}$ we have
\[ -c_{u}^{\prime}\left(  l\right)  t\leq D\left(  C_{\mathfrak{m}}+tl\right) -D\left(  C_{\mathfrak{m}}\right)  \leq-c_{d}^{\prime}\left(  l\right)  t\]
for $t$ small enough, where now $c_{d}^{\prime}\left(  l\right)  $ and $c_{u}^{\prime}\left(  l\right)  $ are some positive valued functions of $l$,
$0<c^{\prime}\left(  l\right)  \leq c^{\prime\prime}\left(  l\right) <+\infty$.
\end{remark}

\vskip.2cm \textbf{Note} \textbf{i). }The Remark above does not imply the maximality claim of our Theorem, as the following example shows:

\vskip .2cm
Let $f$ be a function of two variables defined by
\[ f:=\min\{u_{1},u_{2}\}\ \ \text{where}\ \ u_{1}=-y+3x^{2},u_{2}=y-x^{2}\ .\]
The function $f$ equals 0 at the origin. Consider an arbitrary ray $l$ starting at the origin. Clearly, for some time this ray evades the `horns' --
the region between the parabolas $y=3x^{2}$ and $y=x^{2}.$ But outside the horns the function $f$ is negative. Indeed, inside the the narrow parabola
$y=3x^{2}$ we have $u_{1}<0,u_{2}>0$ so $f$ is negative there; outside the wide parabola $y=x^{2}$ we have $u_{1}>0,u_{2}<0$ so $f$ is negative there as
well. Therefore the origin is a local maximum of $f$ restricted to $l,$ for any $l.$ Yet the origin is not a local maximum of the function $f$ on the
plane, because inside the horns the functions $u_{1}$ and $u_{2}$ are positive so $f$ there is positive as well.

\vskip .2cm
\textbf{Note} \textbf{ii). }The cluster $C_{\mathfrak{m}}$ is not centrally symmetric, so its image $-C_{\mathfrak{m}}$ under central symmetry
produces a different point of the manifold $\mathcal{C}_{6}.$ Hence, the last theorem implies that the submanifold $\mathcal{C}_{6}\left(  \geq d\right)
\subset\mathcal{C}_{6}$ of clusters $\mathbf{m}$ with $D\left(  \mathbf{m}
\right)  \geq d$ has at least two connected components for $d>\sqrt{\frac{12}{11}}-\varepsilon$ once $\varepsilon>0$ is small enough. We believe that these two
connected components stay disjoint for all $d>1$; more precisely, the rigid clusters $C_{\mathfrak{m}}$ and $-C_{\mathfrak{m}}$ can communicate only via
the saddle point cluster $C_{6}.$ Notwithstanding, the submanifold $\mathcal{C}_{6}\left(  \geq 1\right)  \subset\mathcal{C}_{6}$ (and also
$\mathcal{C}_{6}\left( \geq 1-\varepsilon\right)  \subset\mathcal{C}_{6}$ with small $\varepsilon$) is still not connected, as the next section shows.

\subsection{Galois symmetry\vspace{.1cm}}\label{hiddenGsymmetry}
While proving Theorem \ref{theorC6} we revealed a hidden symmetry of the formulas for the coefficients of the Taylor expansions of distances
between the tangent lines at points of the curve $\gamma$. Here we shortly describe this symmetry.

\vskip .2cm
The curve $\gamma$ admits the following parameterization:
\begin{equation} \sin\bigl(\varphi(x)\bigr)=2\sqrt{\frac{(1-x)x(1+x)}{1+7x+4x^{2}}}\ , \label{trajphi0}\end{equation}
\begin{equation} \tan\bigl(\delta(x)\bigr)=\sqrt{\frac{(1-x)(1+3x)}{x(1+7x+4x^{2})}}\ , \label{trajdelta0}\end{equation}
and
\begin{equation} \tan\bigl(\varkappa(x)\bigr)=\frac{x-1}{\sqrt{(1+x)(1+3x)}}\ . \label{trajkappa} \end{equation}
where $x$ ranges from 1 to 0. The record cluster $C_{\mathfrak{m}}$ corresponds to the value $x=1/2$.

\vskip .2cm
We reserve the same letters $\{A,B,C,D,E,F\}$, see Figure \ref{confC6tan}, for the tangent lines of the cluster
$C_{6}\bigl(\varphi\left(x\right),\delta\left(x\right),\varkappa\left(  x\right)  \bigr)$. At each value of $x$ the cluster
$C_{6}\bigl(\varphi\left(x\right),\delta\left(x\right),\varkappa\left(  x\right)  \bigr)$ has the symmetry group
$\mathbb{D}_3$ generated by the permutations $(A,B,C)(D,E,F)$ and $(A,D)(B,F)(C,E)$. The group $\mathbb{D}_6$, under which the initial cluster
$C_6$ is invariant, has the additional generator $\varsigma=(B,C)(D,F)$.

\vskip .2cm
The perturbed position of a line $J\in \{A,B,C,D,E,F\}$
in the cluster $C_{6}\bigl(\varphi\left(x\right),\delta\left(x\right),\varkappa\left(  x\right)  \bigr)$ is
$$J=J\bigl( \varkappa(x)+\vartheta_\varkappa\cdot J_{\varkappa }\, ,\,\varphi(x)+\vartheta_\varphi\cdot J_{\varphi }\, ,\,
\delta(x)+\vartheta_\delta\cdot J_{\delta }\bigr)\ ,$$
where $J_{\varkappa }$, $J_{\varphi }$ and $J_{\delta }$ are the perturbation parameters. We introduced the normalization constants $\vartheta_\varkappa, \vartheta_\varphi$ and $\vartheta_\delta$ needed to formulate the result.

\vskip .2cm
Let
$$ \mathfrak{p}_x =\sqrt{\frac{(1 + x) (1 + 3 x)}{3}}\ .$$

\begin{proposition}\label{geogasy} Let $x$ be a rational number between 0 and 1 such that $\mathfrak{p}_x $ is not rational.

\vskip .2cm
\noindent {\rm (i)} There exists a choice of the normalization constants $\vartheta_\varkappa, \vartheta_\varphi$ and $\vartheta_\delta$ such that
the Taylor coefficients  of the squares of distances belong to $\mathbb{Q}[\mathfrak{p}_x ]$.

\vskip .2cm
\noindent {\rm (ii)} The permutation $\varsigma$, composed with the Galois conjugation $\mathfrak{p}_x \to -\mathfrak{p}_x $
of $\mathbb{Q}[\mathfrak{p}_x ]$, restores the $\mathbb{D}_6$ symmetry of the cluster $C_{6}\bigl(\varphi\left(x\right),\delta\left(x\right),\varkappa\left(  x\right)  \bigr)$.
\end{proposition}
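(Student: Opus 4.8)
The plan is to make the proposition computational and then read off the Galois structure. First I would write down, using the distance formula
\[
d_{uv}^{2}=\frac{\det^{2}[\tau^{\prime},\tau^{\prime\prime},x^{\prime\prime}-x^{\prime}]}{1-(\tau^{\prime},\tau^{\prime\prime})^{2}}\ ,
\]
the fifteen squared distances $d_{JK}^{2}$ between the perturbed lines $J,K\in\{A,B,C,D,E,F\}$, as functions of the three base parameters $\varphi(x),\delta(x),\varkappa(x)$ given by (\ref{trajphi0})--(\ref{trajkappa}) and of the six perturbation parameters $J_{\varkappa},J_{\varphi},J_{\delta}$ scaled by the still-undetermined constants $\vartheta_\varkappa,\vartheta_\varphi,\vartheta_\delta$. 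Expanding in Taylor series in the perturbation parameters, the coefficients are algebraic functions of $x$ and of the scale constants. Substituting the parameterization (\ref{trajphi0})--(\ref{trajkappa}), every occurrence of $\sin\varphi(x)$, $\tan\delta(x)$, $\tan\varkappa(x)$ becomes an explicit radical in $x$; I would clear these by the standard half-angle / Pythagorean substitutions so that $\cos\varphi,\cos\delta,\cos\varkappa$ are also radicals in $x$. After this the natural field that appears is $\mathbb{Q}(x)[\sqrt{1+7x+4x^{2}},\,\sqrt{(1+x)(1+3x)},\ldots]$; the content of part (i) is that with the right choice of $\vartheta_\varkappa,\vartheta_\varphi,\vartheta_\delta$ (each of them a radical in $x$ chosen exactly to absorb the "wrong" square roots) all Taylor coefficients land in the single quadratic extension $\mathbb{Q}[\mathfrak{p}_x]$ with $\mathfrak{p}_x=\sqrt{(1+x)(1+3x)/3}$. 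The hypothesis that $x\in\mathbb{Q}$ with $\mathfrak{p}_x\notin\mathbb{Q}$ guarantees this is an honest degree-two extension with a nontrivial Galois conjugation $\mathfrak{p}_x\mapsto-\mathfrak{p}_x$.

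For part (ii) I would exploit the $\mathbb{D}_3$ symmetry already present (generated by $(A,B,C)(D,E,F)$ and $(A,D)(B,F)(C,E)$) to reduce the bookkeeping: modulo $\mathbb{D}_3$ there are only a few orbits of pairs $\{J,K\}$, so it suffices to track one representative of each orbit. The additional generator $\varsigma=(B,C)(D,F)$ of $\mathbb{D}_6$ acts on the base cluster $C_6=C_6(0,0,0)$ by an honest isometry, but on the curve $C_6(\varphi(x),\delta(x),\varkappa(x))$ with $x\neq 1$ it does not, because the longitudinal shift $\varkappa(x)$ breaks the reflection. The key observation to verify is that the failure of $\varsigma$-invariance is \emph{exactly} a change of sign of $\mathfrak{p}_x$ in the relevant formulas: concretely, $\tan\varkappa(x)=(x-1)/\sqrt{(1+x)(1+3x)}$ is, up to rational factors, $1/\mathfrak{p}_x$ times a rational function of $x$, so reflecting the longitude is equivalent to sending $\mathfrak{p}_x\to-\mathfrak{p}_x$. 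Thus I would check, orbit by orbit, that for each pair $\{J,K\}$,
\[
d_{\varsigma(J)\,\varsigma(K)}^{2}\bigl(\text{perturbed}\bigr)\ =\ \overline{\,d_{JK}^{2}\bigl(\text{perturbed}\bigr)\,}\ ,
\]
where the bar is the Galois conjugation of $\mathbb{Q}[\mathfrak{p}_x]$ — equivalently that the combined operation $\varsigma\circ(\text{conjugation})$ fixes the whole collection of squared distances, which is precisely the statement that $\mathbb{D}_6$ symmetry is "restored".

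The main obstacle will be the correct choice of the three normalization constants $\vartheta_\varkappa,\vartheta_\varphi,\vartheta_\delta$ in part (i): a priori the Taylor coefficients involve several independent-looking surds in $x$ (coming from $\cos\varphi$, $\cos\delta$, $\cos\varkappa$ and from denominators like $1+7x+4x^{2}$), and one must show that a single common quadratic extension suffices and then pin down the $\vartheta$'s so that nothing escapes $\mathbb{Q}[\mathfrak{p}_x]$. I expect this to work because the defining relations (\ref{trajphi0})--(\ref{trajkappa}) were engineered so that the various radicals are not independent — e.g. $1+7x+4x^{2}$ and $(1+x)(1+3x)$ and $(1-x)x(1+x)$ interlock through the identity that puts $\sin\varphi(x)$ on the curve — but making this precise is the computational heart. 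Once the field is correctly identified, part (ii) is essentially a symmetry check: I would phrase it as the assertion that the action of $\varsigma$ on coordinates equals the Galois action on the base point, and verify it on the $\mathbb{D}_3$-orbit representatives. A convenient sanity check along the way is that at $x=1$ one has $\mathfrak{p}_1=\sqrt{8/3}$, $\varkappa(1)=0$, the curve is at $C_6$, and both $\varsigma$-invariance and conjugation-invariance hold trivially, consistent with the degenerate case excluded by the hypothesis $\mathfrak{p}_x\notin\mathbb{Q}$.
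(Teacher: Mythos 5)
Your plan --- Taylor-expand the fifteen squared distances in the perturbation parameters along the curve (\ref{trajphi0})--(\ref{trajkappa}), pick $\vartheta_\varkappa,\vartheta_\varphi,\vartheta_\delta$ to absorb the stray surds into $\mathbb{Q}[\mathfrak{p}_x]$, then check $\mathbb{D}_3$-orbitwise that $\varsigma$ composed with $\mathfrak{p}_x\mapsto-\mathfrak{p}_x$ fixes the whole collection of expansions --- is essentially the computational route behind the paper's statement, which itself only records the outcome of those computations together with the sample constant $\vartheta_\delta$. Your mechanism is consistent with that sample, since $\vartheta_\delta\tan\bigl(\delta(x)\bigr)=\mathfrak{p}_x/\bigl(x(1+7x+4x^2)\bigr)$; the one imprecision is that $\sqrt{(1+x)(1+3x)}=\sqrt{3}\,\mathfrak{p}_x$, so $\tan\varkappa(x)$ is $\sqrt{3}$ times a rational multiple of $1/\mathfrak{p}_x$ rather than a rational multiple, though the sign correspondence $\varkappa\mapsto-\varkappa\ \leftrightarrow\ \mathfrak{p}_x\mapsto-\mathfrak{p}_x$ on which your part (ii) relies is unaffected.
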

\noindent{\bf Note.} Let, for example, the angle $\delta$ be varied. The required normalization factor is
$$ \vartheta_\delta =\sqrt{\frac{1+x}{3x(1-x)(1+7x+4x^2)}}\ .$$

\subsection{Rigidity of the cluster $O_{6}$}
In \cite{K} W. Kuperberg suggested another cluster of six unit non-intersecting cylinders touching the unit sphere and asked whether it can
be unlocked. It is the cluster $O_{6}$ shown on Figure \ref{octahedrConfCyl}.

\begin{figure}[th]
\vspace{-.2cm}
\centering
\includegraphics[scale=0.7]{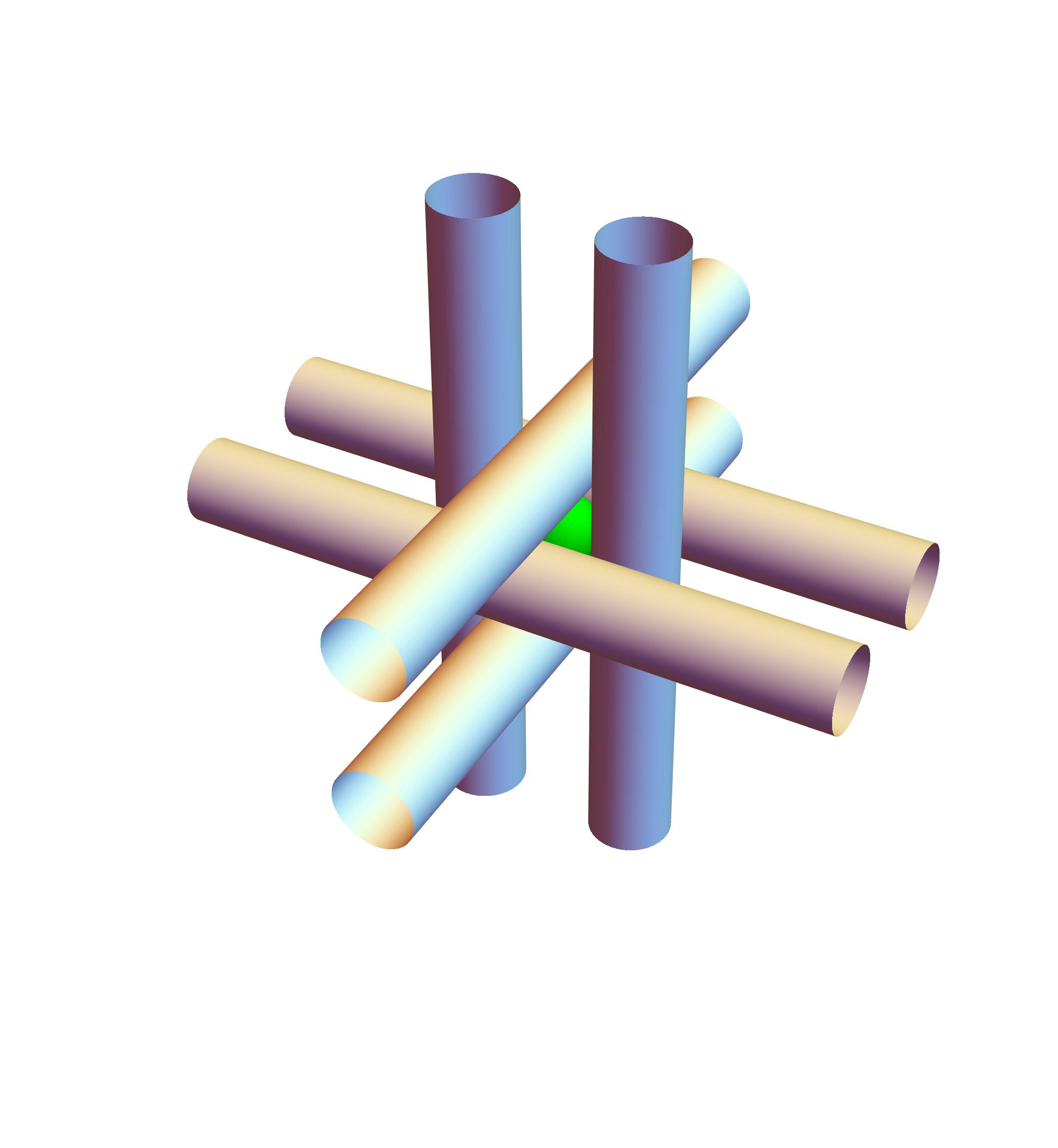}
\vspace{-1.2cm}
\caption{Cluster $O_{6}$ of
cylinders}
\label{octahedrConfCyl}
\end{figure}

\vskip.2cm The main result of our paper \cite{OS3} claims that this cluster cannot be unlocked.

\begin{theorem}
\label{octa} The cluster $O_{6}\ $is a point of local maximum of the function $D$: for any point $\mathbf{m}$ in the vicinity of $O_{6}$ we have
\[ D\left(  \mathbf{m}\right)  <1=D\left(  O_{6}\right) \ .\]

\end{theorem}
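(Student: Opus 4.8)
\noindent The plan is to analyze $D$ near $O_6$ on a slice $\mathcal{V}\subset T_{O_6}M^6$ transverse to the $SO(3)$-orbit of $O_6$, a space of dimension $\dim M^6-3=15$, using the exponential map of $M^6$ to parametrize a neighbourhood. First I would single out the \emph{contact graph} $\Gamma$ of $O_6$, the set of pairs $(i,j)$ with $d_{u_iu_j}(O_6)=1$, and note that only these pairs are relevant: all non-contacting pairs satisfy $d_{u_iu_j}>1$, so in a small enough neighbourhood of $O_6$ the function $D$ of $\left(\ref{41}\right)$ coincides with $\min_{(i,j)\in\Gamma}d_{u_iu_j}$. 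I would also record the symmetry group $G$ of the cluster $O_6$ (the octahedral group, as is visible from Figure~\ref{octahedrConfCyl}) together with its action on $\Gamma$, which one checks to be transitive on contacts; this is what makes a ``self-stress'' available. With these reductions the theorem becomes: for every nonzero $v\in\mathcal{V}$ and every small enough $t>0$ at least one contact strictly shrinks, that is $\min_{(i,j)\in\Gamma}d_{u_iu_j}^2(\exp_{tv})<1$, uniformly in the direction $v$.

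\noindent Using the explicit formula for $d_{uv}^2$ recalled earlier I would write, for each $(i,j)\in\Gamma$,
\[ d_{u_iu_j}^2(\exp_{tv})=1+t\,L_{ij}(v)+t^2\,Q_{ij}(v)+O(t^3), \]
with $L_{ij}$ linear and $Q_{ij}$ quadratic on $\mathcal{V}$, all $G$-equivariant; decomposing $\mathcal{V}$ into $G$-isotypic components block-diagonalizes everything and keeps the linear algebra finite. The first-order step is to exhibit a \emph{positive self-stress}, i.e. weights $w_{ij}>0$ with $\sum_{(i,j)\in\Gamma}w_{ij}L_{ij}\equiv 0$ on $\mathcal{V}$. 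By transitivity of $G$ on $\Gamma$ it suffices to verify the single identity $\sum_{(i,j)\in\Gamma}L_{ij}=0$, a $G$-invariance computation on $\mathcal{V}$; equivalently, $O_6$ admits no infinitesimal unlocking, so no $v$ makes all $L_{ij}(v)>0$, and Gordan's lemma produces a nonnegative self-stress, which symmetrization over $G$ makes strictly positive. A positive self-stress forces the cone $\{v:\ L_{ij}(v)\ge 0\ \forall(i,j)\in\Gamma\}$ to collapse onto the linear space $\mathcal{L}:=\bigcap_{(i,j)\in\Gamma}\ker L_{ij}$ of first-order flexes: outside $\mathcal{L}$ some $L_{ij}(v)<0$, hence that contact already shrinks at first order and we are done for such directions.

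\noindent The second-order step concerns $v\in\mathcal{L}$, and here the aim is to prove that $O_6$ has \emph{no second-order flex}: $\min_{(i,j)\in\Gamma}Q_{ij}(v)<0$ for every nonzero $v\in\mathcal{L}$. I would attempt this once more by a stress argument — weights $w_{ij}>0$ with $\sum_{(i,j)\in\Gamma} w_{ij}Q_{ij}$ negative definite on $\mathcal{L}$ — or, when the $G$-decomposition of $\mathcal{L}$ is small enough, by inspecting the restricted quadratic forms component by component, which is the route taken for $C_{\mathfrak{m}}$ in Theorem~\ref{theorC6}. By homogeneity and compactness of the unit sphere of $\mathcal{L}$ this yields a constant $c>0$ with $\min_{(i,j)\in\Gamma}Q_{ij}(v)\le -c\|v\|^2$ on $\mathcal{L}$.

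\noindent It remains to glue the two regimes into a statement valid on a full neighbourhood of $O_6$. For directions $v$ with $\mathrm{dist}(v,\mathcal{L})$ bounded below relative to $\|v\|$ the first-order estimate $\min_{(i,j)\in\Gamma}L_{ij}(v)\le -c_1\|v\|$ is decisive; for $v$ near $\mathcal{L}$, writing $v=\ell+w$ with $w$ small, one has to combine the first-order bound in the $w$-direction (governing small $t$, since $L_{ij}$ vanishes on $\ell$) with the second-order bound $\min_{(i,j)\in\Gamma}Q_{ij}(\ell)\le -c\|\ell\|^2$ (governing the complementary range of $t$), and verify that the two $t$-ranges overlap once all implied constants and the $O(t^3)$ remainders are made explicit. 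This bookkeeping in the transition zone is the delicate point and, I expect, the main obstacle: as the ``horns'' example in Note~i) following Theorem~\ref{theorC6} shows, maximality of $D$ along every individual ray through $O_6$ does not by itself yield a genuine local maximum, so the estimates must be uniform in $v$ and one must check that a contact certifying $d_{u_iu_j}<1$ can be chosen coherently across the whole region. The isotypic reduction of the forms $Q_{ij}$ on $\mathcal{L}$ and the verification of negative-definiteness there should be the computational heart of the argument, the remainder being structural.
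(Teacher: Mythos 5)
You have correctly assembled the first-order ingredients (a strictly positive stress relation among the differentials of the contact distances, collapsing the admissible cone onto the space of first-order flexes) and a second-order negativity condition on that space; these are essentially the hypotheses (A)--(D) under which the paper proves Theorem \ref{T}, which is then applied to $O_6$ (the detailed verification being in \cite{OS3}). But the step you defer -- gluing the first-order and second-order regimes into a statement on a full neighbourhood -- is not bookkeeping: it is precisely the content of the paper's proof, and your ray-based set-up ($v=\ell+w$, small-$t$ versus large-$t$ ranges along each ray) is exactly the situation that the ``horns'' example is designed to defeat, since the contact certifying the decrease changes with the direction and the implied constants degenerate as $v$ approaches $\mathcal{L}$. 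The paper avoids rays altogether: it foliates a neighbourhood of the origin by the \emph{curved} manifold $\mathcal{E}$ on which the contact functions of each group coincide, compares an arbitrary nearby point $y$ with the closest point $x\in\mathcal{E}$, and proves a decrease that is \emph{uniform} in the normal direction (Lemma \ref{lemobse} and Lemma \ref{prele}); for several stress groups ($k>1$, which is the case relevant to $O_6$) the delicate directions are those normal components belonging to a different group, handled by the five-step cone argument, where the quadratic negativity (\ref{s3}) at $x$ controls small normal displacements and another group's linear decay controls the rest. Without an argument of this kind your proposal proves only ray-wise maximality, which, as you yourself note, does not imply the theorem.

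Two further points where your plan diverges from what is actually needed. First, your second-order condition is stated for the individual forms $Q_{ij}$ restricted to the linear space $\mathcal{L}$; the correct objects are the stress-weighted combinations $Q_{p}$ of (\ref{s21}), because what one must control is the second differential of $\mathsf{F}$ restricted to the curved manifold $\mathcal{E}$, and in that computation a correction involving the second fundamental form of $\mathcal{E}$ (the $l_{u}(\Delta)$ terms in the proof of Proposition \ref{lq1}) cancels only after contracting with the stress coefficients $\lambda^{u}$. Second, you posit a single positive self-stress spanning all contacts (leaning on transitivity of the symmetry group) and, as your preferred route, negative definiteness of one combination $\sum w_{ij}Q_{ij}$ on $\mathcal{L}$; for $O_6$ the relevant structure is $k\geq 2$ disjoint subcollections, each carrying its own convex relation, and the sufficient condition (\ref{s22}) is that the \emph{minimum} of the $k$ group forms be negative on $E\setminus\{\mathbf{0}\}$ -- a genuinely different (and for $O_6$ the applicable) condition, since no single weighted combination need be negative definite there. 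This is why the paper proves the general-$k$ version of Theorem \ref{T} rather than the simpler prestress-stability statement used for $C_{\mathfrak{m}}$ in Theorem \ref{theorC6}.
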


\begin{remark}
There exists a 6D subspace $L_{quadr}$ in the tangent space of $M^{6}$ at $O_{6},$ such that for any $l\in L_{quadr},$ $\left\Vert l\right\Vert =1$ we have
\[ -c^{\prime\prime}\left(  l\right)  t^{2}\leq D\left(  O_{6}+tl\right) -D\left(  O_{6}\right)  \leq-c^{\prime}\left(  l\right)  t^{2}\]
for $t$ small enough, where $O_{6}+tl\in M^{6}$ stands for the exponential map applied to the tangent vector $tl,$ and $0<c^{\prime}\left(  l\right)  \leq
c^{\prime\prime}\left(  l\right)  <+\infty.$ For each tangent vector $l$ outside $L_{quadr}$ we similarly have
\[ -c^{\prime\prime}\left(  l\right)  t\leq D\left(  O_{6}+tl\right)  -D\left( O_{6}\right)  \leq-c^{\prime}\left(  l\right)  t \]
for $t$ small enough, with $0<c^{\prime}\left(  l\right)  \leq c^{\prime\prime}\left(  l\right)  <+\infty.$\end{remark}

\textbf{Note: }the Note i) after the Theorem \ref{theorC6} applies here as well.

\section{Towards the theory of the critical points and critical values of the MIN functions}
The proofs of the above theorems boil down to the study of the `critical' points of the function $D$ on the manifold $\mathcal{C}_{6}.$ The difficulty
here lies in the fact that the function $D$, being a minimum of several analytic functions, is not smooth -- so is not at all a Morse
function. We do not have a complete version of the theory needed here. Rather, we present few results which cover a small part of a general picture.

\vskip .2cm
Let $F_{1}\left(  x\right)  ,\dots,F_{m}\left(  x\right)  $ be analytic functions in a neighborhood of $\bzero\in\mathbb{R}^{n}$
such that $F_{u}(\bzero)=0$, $u=1,\dots,m$, and let
\begin{equation} \mathsf{F}\left(  x\right)  :=\min\left\{  F_{1}\left(  x\right)  ,\dots ,F_{m}\left(  x\right)  \right\}  . \label{deffuF} \end{equation}
Let us consider the differentials $l_{u}$ and second differentials $q_{u}$ of the functions $F_{u}$ at $\bzero\in\mathbb{R}^{n}$ :
\begin{equation} F_{u}(x)=l_{u}\left(  x\right)  +q_{u}\left(  x\right)  +o(2)\ .\label{decofuf}\end{equation}

Here $l_{u}$-s and $q_{u}$-s are linear and, respectively, quadratic forms on the tangent space $T_{\bzero}\mathbb{R}^{n}$ and
$o(2)$ stands for higher order terms.

\vskip .2cm
We call the function
\[ \Delta\left(  x\right)  :=\min\left\{  l_{1}\left(  x\right)  ,\dots ,l_{m}\left(  x\right)  \right\} \]
the $PL$-differential of $\mathsf{F}.$ The range of the differential $\Delta$ can be either a whole line $\mathbb{R}^{1},$ or the negative half-line. In the second
case we say that $\bzero\in\mathbb{R}^{n}$ is a critical point of $\mathsf{F},$ and that $0\in\mathbb{R}^{1}$ is a critical value. The same definition of course works
if we replace $\mathbb{R}^{n}$ by a smooth manifold.

\begin{lemma}
Let $l_{1},...,l_{m}$ be linear functionals on $\mathbb{R}^{n}$. The two conditions are equivalent:

\vskip .1cm
{\rm\bf 1.} The function $\Delta\left(  x\right)  =\min_{i}l_{i}\left( x\right)  $ is non-positive on $\mathbb{R}^{n}$.

\vskip .1cm
{\rm\bf 2.} There is a convex linear relation between $l_{i},$ i.e. for some $\lambda_{1},...,\lambda_{r}>0$ and some $1\leq i_{1}<i_{2}<...<i_{r}\leq m$
\[ \lambda_{1}l_{i_{1}}+...+\lambda_{r}l_{i_{r}}=0\ .\]
\end{lemma}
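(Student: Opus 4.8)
The plan is to prove the two implications separately, with the direction $\mathbf{2}\Rightarrow\mathbf{1}$ being the trivial one and $\mathbf{1}\Rightarrow\mathbf{2}$ resting on a separation/duality argument. For $\mathbf{2}\Rightarrow\mathbf{1}$: suppose $\lambda_{1}l_{i_{1}}+\dots+\lambda_{r}l_{i_{r}}=0$ with all $\lambda_{k}>0$. For any $x\in\mathbb{R}^{n}$ the numbers $l_{i_{1}}(x),\dots,l_{i_{r}}(x)$ sum to zero after weighting by the positive $\lambda_{k}$, so at least one of them is $\le 0$. Hence $\min_{k}l_{i_{k}}(x)\le 0$, and a fortiori $\Delta(x)=\min_{i}l_{i}(x)\le 0$. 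This holds for every $x$, so $\Delta$ is non-positive on all of $\mathbb{R}^{n}$.

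For $\mathbf{1}\Rightarrow\mathbf{2}$, the natural route is through the geometry of the dual vectors. Identify each functional $l_{i}$ with a vector $a_{i}\in\mathbb{R}^{n}$ via $l_{i}(x)=\langle a_{i},x\rangle$. The statement ``$\min_{i}\langle a_{i},x\rangle\le 0$ for all $x$'' says precisely that there is no $x$ with $\langle a_{i},x\rangle>0$ simultaneously for all $i$, i.e. the open cone $\{x:\langle a_{i},x\rangle>0\ \forall i\}$ is empty. I would then invoke the standard fact (a Gordan-type alternative, or equivalently the separating hyperplane theorem applied to the convex hull of $\{a_{1},\dots,a_{m}\}$) that this cone is empty if and only if $\mathbf{0}$ lies in the convex hull of $a_{1},\dots,a_{m}$: indeed if $\mathbf{0}\notin\mathrm{conv}\{a_{i}\}$ then a separating functional $x$ gives $\langle a_{i},x\rangle\ge c>0$ for all $i$, contradicting emptiness; conversely $\mathbf{0}=\sum\mu_{i}a_{i}$ with $\mu_{i}\ge 0$, $\sum\mu_{i}=1$ yields, for any $x$, that $\sum\mu_{i}\langle a_{i},x\rangle=0$, forcing some $\langle a_{i},x\rangle\le 0$. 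Finally, from $\mathbf{0}=\sum_{i=1}^{m}\mu_{i}a_{i}$ with $\mu_{i}\ge 0$ not all zero, discard the indices with $\mu_{i}=0$ and relabel the rest as $i_{1}<\dots<i_{r}$ with coefficients $\lambda_{k}:=\mu_{i_{k}}>0$; this is exactly the convex linear relation $\lambda_{1}l_{i_{1}}+\dots+\lambda_{r}l_{i_{r}}=0$ required in condition~$\mathbf{2}$.

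The only point needing care — and the place I expect the write-up to spend its effort — is the passage from ``the open cone is empty'' to ``$\mathbf{0}\in\mathrm{conv}\{a_{i}\}$''. One must be slightly careful that the hull is closed (it is, being the convex hull of finitely many points, hence compact) so that the separating hyperplane theorem applies and yields a strict inequality $\langle a_{i},x\rangle\ge c>0$; this strictness is what produces a genuine point of the open cone and thus the contradiction. An alternative, fully self-contained argument avoiding any named theorem is an induction on $m$ (or on the dimension $n$): if some $l_{i}\equiv 0$ we are done immediately; otherwise pick $x_{0}$ with $l_{1}(x_{0})<0$ minimal in a suitable sense, and analyze the face of active constraints, reducing to a lower-dimensional instance. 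I would present the separating-hyperplane version as the main proof since it is shortest, and remark that it is the linear (degree-one) shadow of the general MIN-function criticality picture set up just above the lemma. $\myblacksquare$
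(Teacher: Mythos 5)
Your proposal is correct and follows essentially the same route as the paper: the easy direction via the weighted sum, and the direction $\mathbf{1}\Rightarrow\mathbf{2}$ by identifying the functionals with vectors $v_{i}$ and separating $\mathbf{0}$ from the convex hull of their tips by a hyperplane, whose normal would otherwise give a point where all $l_{i}$ are positive. The only difference is cosmetic (you phrase it as Gordan's alternative and explicitly discard the zero coefficients to get strictly positive $\lambda$'s, a point the paper leaves implicit).
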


\begin{proof}
{\rm\bf 2} $\Rightarrow$ {\rm\bf 1.} If $\lambda_{1}l_{i_{1}}+...+\lambda_{r}l_{i_{r}}=0$ then, evidently, for
every $x\in \mathbb{R}^{n}$ there is an index $i_{j}$ such that $l_{i_{j}}\left(  x\right) \leq 0.$

\vskip .2cm
{\rm\bf 1} $\Rightarrow$ {\rm\bf 2.} It is helpful to introduce an Euclidean structure on $\mathbb{R}^{n}$ with a scalar product $\left\langle ,\right\rangle$, so that
each functional $l_{i}\left( \ast\right)  $ can be written as $\left\langle v_{i},\ast\right\rangle$, with a nonzero vector $v_{i}$. Let $H_{i}\subset \mathbb{R}^{n}$ be
the halfspace defined by $H_{i}=\left\{ x\,  \vert\,\left\langle v_{i},x\right\rangle \leq0\right\}$. The condition $\Delta\left(  x\right)  \leq 0$ for all $x$ means that
$\cup_{i}H_{i}=\mathbb{R}^{n}$.

\vskip .2cm
Let $P$ be the convex envelope of the tips of the vectors $v_{1},...,v_{m}$. We claim that $\mathbf{0}\in P$ proving the implication. Indeed, in the opposite case
there is an affine hyperplane $N\subset M$ separating $\mathbf{0}$ from $P.$ Let $n$ be the normal to $N,$ pointing into the halfspace containing $P.$ The
scalar products $\left\langle v_{i},n\right\rangle$ are all positive which is a contradiction.\end{proof}

\vskip .2cm
All the critical clusters of balls which were considered in previous sections were critical points of the function $D$ in the above sense.

\vskip .2cm
Let $\bzero\in\mathbb{R}^{n}$ be a critical point of $\mathsf{F}.$ Define the subset $V^{0}\subset T_{\bzero}\mathbb{R}^{n}$ by $V^{0}:=\left\{  x\,\vert\,\Delta\left(
x\right)  =0\right\}$.

\begin{lemma} The set $V^{0}$ is convex.\end{lemma}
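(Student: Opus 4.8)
The plan is to characterize $V^0$ directly from the previous lemma. Since $\bzero$ is a critical point, the $PL$-differential $\Delta$ is non-positive on all of $T_{\bzero}\mathbb{R}^n$, so $l_i(x)\le 0$ is impossible to violate simultaneously; in particular $V^0=\{x : \Delta(x)=0\}=\{x : l_i(x)\ge 0 \text{ for all } i\}$, because $\min_i l_i(x)=0$ forces every $l_i(x)\ge 0$, and conversely if all $l_i(x)\ge 0$ then, combined with $\Delta(x)\le 0$ always, we get $\Delta(x)=0$. Thus $V^0=\bigcap_{i=1}^m\{x : l_i(x)\ge 0\}$.

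Having rewritten $V^0$ this way, convexity is immediate: each set $\{x : l_i(x)\ge 0\}$ is a closed half-space (or all of $\mathbb{R}^n$, if $l_i\equiv 0$), hence convex, and an arbitrary intersection of convex sets is convex. So the core of the argument is the identity $V^0=\bigcap_i\{l_i\ge 0\}$, which rests on the hypothesis that $\bzero$ is critical, i.e. that $\Delta\le 0$ everywhere — exactly the conclusion of the preceding lemma (equivalently, condition 2 there, the existence of a convex linear relation among the $l_i$).

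The only subtlety worth flagging is that the equivalence $\Delta(x)=0 \iff (\forall i)\, l_i(x)\ge 0$ genuinely uses criticality: without it one could have $\Delta(x)=0$ at a point where $\min$ is attained with value $0$ while $\Delta$ takes positive values elsewhere, and then $V^0$ need not be a half-space intersection — but the set $\{\Delta=0\}$ would still be $\{x : l_i(x)\ge 0\ \forall i\}\cup(\text{points where some }l_i<0\text{ but }\min=0)$, and the second part is empty precisely because no $l_i$ can go strictly negative while the min stays at $0$... which again is just the statement that at a critical point the min is $\le 0$, so actually the identity holds with no extra work once criticality is invoked. I would write this out in two or three lines and then conclude. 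I do not anticipate a real obstacle here; the statement is essentially a reformulation plus the trivial fact that intersections of half-spaces are convex. If one wanted, one could also note $V^0$ is a (possibly lower-dimensional) polyhedral cone, but that is more than the lemma asks.

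\begin{proof}
Since $\bzero$ is a critical point of $\mathsf{F}$, by definition the range of $\Delta$ is contained in the negative half-line, i.e. $\Delta(x)=\min_i l_i(x)\le 0$ for every $x\in T_{\bzero}\mathbb{R}^n$. Hence for any $x$ we have $\Delta(x)=0$ if and only if $l_i(x)\ge 0$ for all $i=1,\dots,m$: if all $l_i(x)\ge 0$ then $\min_i l_i(x)\ge 0$, which together with $\Delta(x)\le 0$ gives $\Delta(x)=0$; conversely, if $\Delta(x)=0$ then $l_i(x)\ge\min_j l_j(x)=0$ for each $i$. Therefore
\[
V^0=\bigcap_{i=1}^{m}\{x\in T_{\bzero}\mathbb{R}^n \,\vert\, l_i(x)\ge 0\}.
\]
Each set in this intersection is either a closed half-space (when $l_i\not\equiv 0$) or the whole space (when $l_i\equiv 0$), and in either case it is convex. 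An intersection of convex sets is convex, so $V^0$ is convex.
\end{proof}
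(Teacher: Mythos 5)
Your proof is correct and follows essentially the same route as the paper: criticality gives $\Delta\le 0$ everywhere, hence $V^0$ coincides with $\{x\,\vert\,\Delta(x)\ge 0\}=\bigcap_i\{x\,\vert\,l_i(x)\ge 0\}$, an intersection of half-spaces, which is convex. Nothing is missing; your explicit check of the equivalence and of the $l_i\equiv 0$ case is just a slightly more detailed write-up of the paper's argument.
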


\begin{proof}For linear functionals $l_{1},\ldots,l_{m}$ on $\mathbb{R}^{n}$, let $\Delta\left(  x\right):  =\min_{i}l_{i}\left(  x\right)$ and
$V^{\geq}=\left\{  x\,\vert\,\Delta\left(  x\right)  \geq0\right\}$. The set $V^{\geq}=\cap_{i}\left\{  x\,\vert\, l_{i}\left(  x\right)  \geq0\right\}  $ is evidently convex. In our case
 the function $\Delta$ is non-positive, so $V^{0}=V^{\geq}\,$ is convex.\end{proof}

\vskip .2cm
Let $\bzero\in\mathbb{R}^{n}$ be a critical point of $\mathsf{F}$. Let $E=\cap_{i}\left\{  x\,\vert\, l_{i}\left(  x\right)  =0\right\}\subset V^0$. We claim that $E$
is the maximal linear subspace contained in the set $V^0$. Indeed suppose $y\in V^0\setminus E$. Then for some $j$ either $l_{j}\left(  y\right)<0$ or
 $l_{j}\left(  -y\right)<0$. Therefore the linear space $E\oplus\mathbb{R}y$ is not contained in $V^0$.

\vskip .2cm
The number $N\left(\bzero\right):=\dim E$ we call {\it null-index} of the critical point $\bzero$. For example, $N_{FCC}  =
N_{HCP} =1,$ $N_{C_{\mathfrak{m}}} =4,$ $N_{O_{6}}  =6$ -- if understood $\operatorname{mod}\left(  SO\left(  3\right)  \right)$.
For $m=1$ the space $E$ is the whole tangent space $T_{\bzero}\mathbb{R}^{n}$.

\vskip .2cm
We shall now establish a sufficient condition which ensures that the point $\mathbf{0}\in\mathbb{R}^{n}$ is a sharp local maximum
of the function ${\sf F}$, see (\ref{deffuF}).

\vskip .2cm
We assume that the family $\left\{  F_{1}\left(  x\right)  ,\dots,F_{m}\left( x\right)  \right\}  $ of $m$ analytic functions in $n$ variables, $m\leq n$,
possesses the following properties.
\begin{itemize}
\item[(A)]  The linear space, generated by the linear forms $l_{1},\dots,l_{m}$,
is $(m-k)$ dimensional, with $k$ positive. 
\item[(B)] The collection $\left\{  l_{1},\dots,l_{m}\right\}$ of linear forms can be split into $k$ subcollections $\left\{  l_{1},\dots,l_{m_{1}}\right\}  ,$
$\left\{  l_{m_{1}+1},\dots,l_{m_{2}}\right\} ,$ $\ldots ,$ $\left\{  l_{m_{k-1} +1},\dots,l_{m}\right\}  $
with non-intersecting spans, with \textbf{exactly one} linear relation between the functionals in each subcollection.
\item[(C)] For each $p=1,\dots,k$ the linear relation, from the property (B), between the functionals $\left\{  l_{m_{p-1}+1},\dots,l_{m_{p+1}}\right\}$
is strictly convex:
\begin{equation} \lambda_{p}^{1}l_{m_{p-1}+1}+\ldots+\lambda_{p}^{m_{p}}l_{m_{p}}=0\ ,\label{s2}\end{equation}
with $\lambda_{p}^{s}>0\ ,\ m_{p-1}+1\leq s\leq m_{p},$ $1\leq p\leq k$.
\item[(D)] For
\[ E_{p}=\bigcap_{u=m_{p-1}+1}^{m_{p}}\;\ker l_{u}\ ,\ E=\bigcap_{p=1}^{k}E_{p}\ ,\]
and $k$ quadratic forms $Q_{p},$ $1\leq p\leq k,$ defined by
\begin{equation} Q_{p}=\lambda_{p}^{1}q_{m_{p-1}+1}+\ldots+\lambda_{p}^{m_{p}}q_{m_{p}}\ ,\ \label{s21}\end{equation}
the inequality
\begin{equation} \min\left\{  Q_{1}(\xi),...,Q_{k}(\xi)\right\}  |_{_{\xi\in E}}\geq 0\label{s22} \end{equation}
admits only the trivial solution $\xi=0$.
\end{itemize}

\begin{theorem}
\label{T} (\cite{OS3}) Under the conditions \textrm{(A)} -- \textrm{(D)}, the origin $\bzero\in\mathbb{R}^{n}$ is a strict local maximum of the function
$\mathsf{F}(x)$.
\end{theorem}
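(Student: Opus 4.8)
The plan is to show that under the hypotheses (A)--(D) the function $\mathsf{F}$ decays strictly along every ray through $\bzero$, but in a way that is uniform enough — via a compactness argument on directions — to upgrade ray-wise maximality to genuine local maximality, thereby avoiding the ``horns'' phenomenon of Note i). First I would set up coordinates: write $\mathbb{R}^n=E\oplus E^{\perp}$ where $E=\bigcap_p E_p$ is the null-space from (D), and decompose a general point as $x=\xi+\eta$ with $\xi\in E$, $\eta\in E^{\perp}$. By (B) and (C), each subcollection $\{l_{m_{p-1}+1},\dots,l_{m_p}\}$ has exactly one (strictly convex) linear relation, so on $E^{\perp}$ none of the $l_u$ vanishes identically in a ``compatible'' way; the key elementary observation is that for $\eta\neq 0$ in $E^{\perp}$ there is, within \emph{each} block $p$, at least one index $u$ with $l_u(\eta)<0$ — this is exactly the content of the strictly convex relation $\sum_s\lambda_p^s l_{m_{p-1}+s}=0$ together with $\eta\notin E_p$ for at least one $p$, but more strongly I would argue that along any direction with a nonzero $E^{\perp}$-component the linear part $\Delta$ is \emph{strictly} negative, with a bound $\Delta(x)\le -c\,\|\eta\|$ for a fixed $c>0$ (by compactness of the unit sphere in $E^{\perp}$ and continuity of $\min_u l_u$).

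Next I would split into the two regimes. In the ``linear regime'', where $\|\eta\|\ge K\|\xi\|^2$ for a large constant $K$ (equivalently, where the quadratic terms cannot compensate the linear drop), I use $\mathsf{F}(x)\le \min_u l_u(x)+O(\|x\|^2) \le -c\|\eta\| + O(\|\xi\|^2+\|\eta\|^2) < 0$ for $x$ small, since the $-c\|\eta\|$ term dominates. In the ``quadratic regime'', where $\|\eta\|\le K\|\xi\|^2$, the point is essentially governed by its $E$-component $\xi$: for each block $p$, forming the convex combination with the weights $\lambda_p^s$ gives
\[
\mathsf{F}(x)\le \frac{1}{\sum_s\lambda_p^s}\sum_s\lambda_p^s F_{m_{p-1}+s}(x) = \frac{1}{\sum_s\lambda_p^s}\bigl(Q_p(\xi)+o(\|x\|^2)\bigr),
\]
using that the linear parts cancel by (C) and that $l_u(\eta)$, the cross term $\xi$--$\eta$, and $q_u(\eta)$ contributions are all $o(\|\xi\|^2)$ when $\|\eta\|\le K\|\xi\|^2$. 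Hence $\mathsf{F}(x)\le \frac{1}{\sum_s\lambda_p^s}Q_p(\xi)+o(\|\xi\|^2)$ for \emph{every} $p=1,\dots,k$, so $\mathsf{F}(x)\le \bigl(\min_p \tfrac{1}{\sum_s\lambda_p^s}Q_p(\xi)\bigr)+o(\|\xi\|^2)$. By hypothesis (D) the function $\xi\mapsto \min_p Q_p(\xi)$ on the unit sphere of $E$ has no nonnegative value, hence by compactness is bounded above by some $-c'<0$; absorbing the positive constants $1/\sum_s\lambda_p^s$ into $c'$ gives $\mathsf{F}(x)\le -c'\|\xi\|^2 + o(\|\xi\|^2)<0$ for $\xi\neq 0$ and $x$ small. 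Combining the two regimes (and noting $x=0$ is the only point with both $\xi=0$ and $\eta=0$), $\mathsf{F}(x)<0=\mathsf{F}(\bzero)$ for all sufficiently small $x\neq\bzero$, which is the strict local maximum.

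The main obstacle I expect is the bookkeeping at the interface of the two regimes: one must choose the threshold constant $K$ and the neighborhood radius in the right order so that the $o(\cdot)$ error terms in the quadratic regime are genuinely dominated by $-c'\|\xi\|^2$ (this needs $\|\eta\|=O(\|\xi\|^2)$ to control cross terms $l_u(\xi)\cdot(\text{stuff})$ and $q_u$ evaluated on $\eta$), while simultaneously ensuring the linear regime's $-c\|\eta\|$ beats the full quadratic error $O(\|x\|^2)$; this is where condition (A), that the $l_u$ span is exactly $(m-k)$-dimensional with precisely $k$ independent relations organized blockwise by (B), is used to guarantee there is no ``leftover'' direction in $E^{\perp}$ along which $\Delta$ fails to be strictly negative. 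A secondary subtlety is verifying that the single linear relation per block is what forces the linear parts to cancel \emph{exactly} when we form $\sum_s\lambda_p^s F_{m_{p-1}+s}$, so that $Q_p$ is the genuine leading term — here the uniqueness clause in (B) is essential, since two independent relations would leave the relevant convex combination underdetermined.
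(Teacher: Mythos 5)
Your argument is correct, but it takes a genuinely different route from the paper's. You never introduce the equality manifold $\mathcal{E}$ of (\ref{91}); instead you use the flat splitting $\mathbb{R}^{n}=E\oplus E^{\perp}$, $x=\xi+\eta$, and the dichotomy $\|\eta\|\gtrless K\|\xi\|^{2}$. The paper restricts $\mathsf{F}$ to $\mathcal{E}$ (implicit function theorem), identifies the second differential of the restriction with the forms $Q_{p}$, proves a spherical-cap lemma (Lemma \ref{lemobse}) giving a uniformly negative functional among the $l_{u}$ on the normal spaces $N_{x}$, and, for $k\geq2$, runs the five-step segment argument with the cones $C_{{\sf r}}\left(N_{x}^{p}\right)$ precisely because the naive extension off $\mathcal{E}$ fails. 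Your linear regime replaces the cap lemma by the elementary observation that (C) forces $\min_{u}l_{u}(\eta)\leq-c\|\eta\|$ on $E^{\perp}$ (if all $l_{u}(\eta)\geq0$, each convex relation forces every $l_{u}(\eta)=0$, i.e.\ $\eta\in E$; then compactness of the unit sphere of $E^{\perp}$), so that $\mathsf{F}(x)\leq-c\|\eta\|+C\|x\|^{2}<0$ there; your quadratic regime replaces the restriction to $\mathcal{E}$ by the convex-combination bound $\mathsf{F}(x)\leq\Lambda_{p}^{-1}\sum_{s}\lambda_{p}^{s}F_{m_{p-1}+s}(x)$, whose linear part vanishes identically by (\ref{s2}), leaving $\Lambda_{p}^{-1}\bigl(Q_{p}(\xi)+O(\|\xi\|^{3})\bigr)$, after which (D) and compactness give $\mathsf{F}(x)\leq-c'\|\xi\|^{2}+O(\|\xi\|^{3})$; the quantifier order (constants from the data, then $K$, then the radius) works out as you indicate. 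What your route buys is a shorter, fully elementary proof that in fact uses only (C) and (D) (conditions (A), (B) merely organize the relations); what the paper's route buys is the geometric information near $\mathcal{E}$ — the identification of $\sum_{u}\lambda^{u}q_{u}$ as the genuine Hessian of $\mathsf{F}$ restricted to $\mathcal{E}$ — which is what yields the converse (\emph{iff}) statement of Proposition \ref{lq1} in the linear-quadratic case. Two slips in your write-up are harmless: the claim that \emph{each} block contains an index with $l_{u}(\eta)<0$ is false in general (only some block does, which is all you use), and in the quadratic regime $l_{u}(\eta)$ is $O(K\|\xi\|^{2})$, not $o(\|\xi\|^{2})$ — but that term cancels exactly in the weighted sum by (C), as you yourself note, so it never needs to be small.
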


In \cite{OS2} we were using a special case of this theorem, with $k=1,$ which is simpler. It then becomes an `if and only if' statement.

\vskip .2cm
\textbf{Note.} If $m=1$ we have the situation of a Morse function $F_1$. Indeed, $k$ must be equal to 1 by the property (A) and, by (B),
the linear functional $l_1$ vanishes.

\section{Proof of Theorem \ref{T}}

In this section we present a proof, having a more geometric flavor than the one given in \cite{OS3}, of Theorem \ref{T}. In the
first subsection we recall the proof, taken from \cite{OS2}, for the special case $k=1,$ since in this
case the notation is lighter. The general case is treated in the second subsection.

\subsection{Case $k=1$}
The key object of the proof is the set
\begin{equation} \mathcal{E}=\left\{  x\in\mathbb{R}^{n}:F_{1}\left(  x\right)  =\ldots =F_{m}\left(  x\right)  \right\}  \ . \label{91} \end{equation}

We assume that all occurring real vector spaces are equipped with a Euclidean structure. For a vector $v$ we denote by $\hat{v}$ the unit vector in the
direction of the vector $v$.

\vskip .2cm
Our proof will use the following observation.

\begin{lemma}
\label{lemobse} Let $\underline{\lambda}=\{\lambda^{1},\ldots,\lambda^{m}\}$ be a collection of $m$ positive real numbers, $\lambda^{j}>0$, $j=1,\dots,m$.
Let $\mathcal{W}_{\underline{\lambda}}$ be the space of $m$-tuples $\{ v_{1},\ldots,v_{m}\}$ of vectors in $\mathbb{R}^{m-1}$, generating the space
$\mathbb{R}^{m-1}$ and such that
\begin{equation} \lambda^{1}v_{1}+\ldots+\lambda^{m}v_{m}=0\ . \label{colire} \end{equation}
Then there exists a continuous positive-valued function $\delta\colon \mathcal{W}_{\underline{\lambda}}\to\mathbb{R}_{>0}$ such that for any unit
vector $\mathsf{s}\in\mathbb{R}^{m-1}$ we have
\begin{equation} \min_{i}\left\langle \mathsf{s},\hat{v}_{i}\right\rangle <-\delta\left( v_{1},\dots,v_{m} \right)  . \label{111} \end{equation}
\end{lemma}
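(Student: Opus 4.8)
The statement is a compactness assertion: over the set $\mathcal{W}_{\underline{\lambda}}$ of $m$-tuples satisfying the convex relation \eqref{colire} and spanning $\mathbb{R}^{m-1}$, the quantity
\[
\delta\bigl(v_1,\dots,v_m\bigr) \;:=\; -\sup_{\|\mathsf{s}\|=1}\;\min_i\;\bigl\langle \mathsf{s},\hat v_i\bigr\rangle
\]
is strictly positive and depends continuously on the tuple. The plan is first to prove pointwise positivity of this $\delta$ at a fixed admissible tuple, then to upgrade to continuity. For positivity, fix $\{v_1,\dots,v_m\}\in\mathcal{W}_{\underline\lambda}$ and a unit vector $\mathsf{s}$. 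If $\min_i\langle\mathsf{s},\hat v_i\rangle\ge 0$, then $\langle\mathsf{s},v_i\rangle\ge 0$ for all $i$ (the $\hat v_i$ are honest unit vectors since each $v_i\ne 0$ — this uses that the $v_i$ span $\mathbb{R}^{m-1}$, so none can vanish if $m-1\ge 1$; actually one should note some $v_i$ could a priori be zero, but then the span condition forces $m-1$ of them to be nonzero, and the argument runs on those — I would simply observe that \eqref{colire} with all $\lambda^j>0$ prevents any proper subset from spanning-with-a-zero, or more cleanly restrict attention to nonzero $v_i$). Pairing $\mathsf{s}$ with \eqref{colire} gives $\sum_j \lambda^j\langle\mathsf{s},v_j\rangle=0$ with all $\lambda^j>0$ and all summands $\ge 0$, forcing $\langle\mathsf{s},v_j\rangle=0$ for every $j$; since the $v_j$ span $\mathbb{R}^{m-1}$ this means $\mathsf{s}=0$, contradicting $\|\mathsf{s}\|=1$. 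Hence $\min_i\langle\mathsf{s},\hat v_i\rangle<0$ for every unit $\mathsf{s}$, and by compactness of the unit sphere and continuity of $\mathsf{s}\mapsto\min_i\langle\mathsf{s},\hat v_i\rangle$ the supremum over $\|\mathsf{s}\|=1$ is attained and is $<0$; call its negative $\delta(v_1,\dots,v_m)>0$.

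For the continuity of $\delta$ on $\mathcal{W}_{\underline\lambda}$, the natural route is to note that the map $(v_1,\dots,v_m,\mathsf{s})\mapsto \min_i\langle\mathsf{s},\hat v_i\rangle$ is jointly continuous on the locus where all $v_i\ne 0$ (normalization $v\mapsto\hat v$ is continuous off the origin, and a finite min of continuous functions is continuous), and then $\delta$ is the negated partial maximum of this function over the compact fibre $\{\|\mathsf{s}\|=1\}$. Continuity of a partial sup/inf of a jointly continuous function over a fixed compact set is standard (a uniform-continuity / Berge maximum-theorem argument). I would therefore want $\mathcal{W}_{\underline\lambda}$ — or the relevant piece of it entering the application — to lie inside the open set where no $v_i$ is zero; if the paper genuinely allows zero vectors in the tuple, one either handles that stratum separately or, more likely, the spanning hypothesis together with $m\le (m-1)+1$ forces every $v_i\ne 0$ and this subtlety evaporates. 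Modulo this bookkeeping, continuity is immediate.

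The one genuinely load-bearing point, and the place I would be most careful, is the passage from "$\delta$ is positive at each fixed tuple" to "there is a single continuous function $\delta(\cdot)$ with \eqref{111}": one must not, for instance, try to take an infimum of $\delta$ over all of $\mathcal{W}_{\underline\lambda}$, since that infimum can be $0$ (as tuples degenerate toward a non-spanning or a zero-vector configuration, $\delta\to 0$) — the lemma deliberately asks only for a \emph{function} $\delta(v_1,\dots,v_m)$, not a uniform constant, precisely to accommodate this. So the correct reading is: take $\delta(v_1,\dots,v_m)$ to be exactly the quantity defined above (negated partial max over the sphere), verify it is $>0$ pointwise by the convex-relation argument, and verify it is continuous by the maximum-theorem argument; strict inequality in \eqref{111} then holds because the outer supremum over the compact sphere is attained. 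I expect the writing obstacle, not the mathematical obstacle, to be keeping the roles of the three inequalities straight — strict at each $\mathsf{s}$, attained hence still strict after the sup, and merely continuous (not uniform) in the tuple.
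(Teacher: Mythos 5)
Your argument is essentially the paper's own proof in different clothing: the paper phrases the pointwise step with spherical caps (for every unit $\mathsf{s}$ some $\langle\mathsf{s},v_i\rangle<0$, because the $v_i$ span and satisfy the positive relation \eqref{colire}, hence the open caps $D_i(\pi/2)$ around the $-\hat v_i$ cover $\mathbb{S}^{m-2}$), and then uses compactness of the sphere exactly as you do to get a uniform margin, with continuity in the tuple declared straightforward. Your side remark about possible zero vectors also resolves the way you guessed: if some $v_i=0$, the remaining $m-1$ vectors would have to be a basis of $\mathbb{R}^{m-1}$ while satisfying the nontrivial positive relation \eqref{colire}, which is impossible, so all $v_i\neq 0$ throughout $\mathcal{W}_{\underline\lambda}$ and the normalization $v\mapsto\hat v$ is continuous where you need it.

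There is, however, one incorrect step at the very end, and it is exactly the inequality-bookkeeping you flagged as delicate. If you set $\delta(v_1,\dots,v_m):=-\max_{\|\mathsf{s}\|=1}\min_i\langle\mathsf{s},\hat v_i\rangle$, then at a maximizing unit vector $\mathsf{s}^{*}$ you get $\min_i\langle\mathsf{s}^{*},\hat v_i\rangle=-\delta(v_1,\dots,v_m)$, so the required \emph{strict} inequality \eqref{111} fails precisely there; your justification that strictness holds ``because the supremum is attained'' is backwards --- attainment is what produces the equality case. The repair is trivial (take, say, half of your quantity, which is still positive and continuous and now satisfies \eqref{111} strictly for every unit $\mathsf{s}$), and this is exactly what the paper builds in from the start: it passes from the critical angle $\alpha_0>\pi/2$ to the intermediate angle $\bar\alpha=\frac12\bigl(\alpha_0+\frac{\pi}{2}\bigr)$ and sets $\delta=-\cos\bar\alpha$, so the strict margin is there by construction. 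With that one-line correction your proof is complete and coincides in substance with the paper's.
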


\begin{proof}
For an angle $\alpha$, $0\leq\alpha<\pi$, let $D_{j}\left(  \alpha\right)$, $j=1,\dots,m$, denote the open spherical cap, centered at ($-\hat{v}_{j}$), on
the unit sphere $\mathbb{S}^{m-2}$, consisting of all the points $\mathsf{s}\in\mathbb{S}^{m-2}$ such that the angle $\measuredangle\left(
\mathsf{s},\hat{v}_{j}\right)  >\alpha$.

\vskip .2cm
For any unit vector $\mathsf{s}$ there exists an index $i$ such that $\left\langle \mathsf{s},v_{i}\right\rangle <0$. Indeed, since the
vectors $v_{1},\ldots,v_{m}$ span the whole space $\mathbb{R}^{m-1}$, some of the scalar products $\left\langle \mathsf{s},v_{j}\right\rangle $,
$j=1,\dots,m$, are nonzero. Taking the scalar product of the relation (\ref{colire}) with the vector $\mathsf{s}$ we see that at least one of the
scalar products $\left\langle \mathsf{s},v_{i}\right\rangle $ has to be negative. Therefore
\[ \bigcup_{i=1}^{m}\, D_{i}\left(  \frac{\pi}{2}\right)  =\mathbb{S}^{m-2}\ .\]
Thus,
\[ \alpha_{0}\left(  v_{1},\ldots,v_{m}\right)  >\frac{\pi}{2}\ ,\]
where the function $\alpha_{0}\left(  v_{1},\ldots,v_{m}\right)$ is defined by
\[ \alpha_{0}\left(  v_{1},\ldots,v_{m}\right)  =\sup\left\{  \alpha :\bigcup_{i=1}^{m}D_{i}\left(  \alpha\right)  =\mathbb{S}^{m-2}\right\}  \ .\]
Let
\[ \bar{\alpha}\left(  v_{1},\ldots,v_{m}\right)  :=\frac{1}{2}\left[  \alpha_{0}\left(  v_{1},\ldots,v_{m}\right)  +\frac{\pi}{2}\right]  \ .\]
Clearly, $\bigcup_{i=1}^{m} D_{i}\left(  \bar{\alpha}\right)  =\mathbb{S}^{m-2}$. Define the function $\delta$ by
\[ \delta\left(  v_{1},\ldots,v_{m}\right)  =-\cos\bar{\alpha}\left( v_{1},\ldots,v_{m}\right)  .\]
With this choice of the function $\delta$ the relation $\left(  \ref{111} \right)  $ clearly holds. The positivity and the continuity of the function
$\delta$ are straightforward.
\end{proof}

\vskip .2cm
We return to the consideration of our analytic functions.

\begin{lemma}
\label{prele} If the point $y\in\mathbb{R}^{n}$ happens to be away from the set $\mathcal{E}$, see (\ref{91}), and the norm $\left\Vert y\right\Vert $ is
small enough then one can find a point $x$ on $\mathcal{E}$ such that $\mathsf{F}\left(  y\right)  <\mathsf{F}\left(  x\right)  .$

\vskip .2cm
Moreover, there exists a constant $c>0$ such that for $y\notin \mathcal{E},$ and $x=x\left(  y\right)  \in\mathcal{E}$ being the point in
$\mathcal{E}$ closest to $y$ we have
\begin{equation} \mathsf{F}\left(  y\right)  <\mathsf{F}\left(  x\right)  -c\left\Vert x-y\right\Vert , \label{85}\end{equation}
provided, again, that the norm $\left\Vert y\right\Vert $ is small enough.
\end{lemma}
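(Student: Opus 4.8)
The plan is to show that near $\bzero$ the function $\mathsf{F}$ is controlled, along directions transverse to $\mathcal{E}$, by the $PL$-differential $\Delta$, which in the case $k=1$ is strictly negative outside the kernel $E=\bigcap_i \ker l_i$ thanks to the strictly convex relation $\lambda^1 l_1+\ldots+\lambda^m l_m=0$. Concretely, given $y\notin\mathcal{E}$ with $\|y\|$ small, let $x=x(y)\in\mathcal{E}$ be the nearest point of $\mathcal{E}$ and set $w=y-x$. Since the smooth manifold $\mathcal{E}$ is tangent at $\bzero$ to $E$ (here one uses that on $\mathcal{E}$ all the $F_u$ agree, so their differences $F_u-F_v$ vanish on $\mathcal{E}$, hence the tangent space $T_\bzero\mathcal{E}$ lies in $\bigcap_{u,v}\ker(l_u-l_v)=E$; the reverse inclusion and smoothness of $\mathcal{E}$ come from the implicit function theorem using property (A) — the $m-1$ independent differences $l_u-l_1$), the displacement vector $w$ is, up to a small error, orthogonal to $E$, i.e. it lies essentially in the $(m-1)$-dimensional orthogonal complement $E^\perp$ on which $l_1,\ldots,l_m$ restrict to vectors $v_1,\ldots,v_m$ spanning $E^\perp$ and still satisfying $\lambda^1 v_1+\ldots+\lambda^m v_m=0$.

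Next I would apply Lemma \ref{lemobse} to these vectors $v_1,\ldots,v_m\in E^\perp\cong\mathbb{R}^{m-1}$: there is a uniform $\delta_0>0$ such that for every unit vector $\mathsf{s}\in E^\perp$ some index $i$ has $\langle \mathsf{s},\hat v_i\rangle<-\delta_0$. Taking $\mathsf{s}$ to be (the normalization of) the component of $w$ in $E^\perp$, this yields an index $i=i(y)$ with $l_i(w)\le -\delta_0\,\|v_i\|\,\|w\|$ plus a negligible correction from the $E$-component of $w$ (which is $o(\|w\|)$, being forced by the tangency of $\mathcal{E}$ to $E$). Now expand $F_i$ at the point $x$: since $x\in\mathcal{E}$, $\mathsf{F}(x)=F_i(x)$, and
\[
\mathsf{F}(y)\le F_i(y)=F_i(x)+l_i(w)+O(\|x\|\,\|w\|)+O(\|w\|^2).
\]
Because $\|x\|,\|w\|\to 0$ as $\|y\|\to 0$, the error terms are dominated by the linear gain $l_i(w)\le -\tfrac{\delta_0}{2}\,\min_i\|v_i\|\cdot\|w\|$ once $\|y\|$ is small enough. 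Setting $c:=\tfrac{\delta_0}{4}\min_i\|v_i\|>0$ gives $\mathsf{F}(y)<\mathsf{F}(x)-c\|x-y\|$, which is (\ref{85}); in particular $\mathsf{F}(y)<\mathsf{F}(x)$, proving the first assertion as well.

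The main obstacle is making rigorous the claim that the displacement $w=y-x(y)$ is nearly orthogonal to $E$ with an $o(\|w\|)$ error, uniformly as $\|y\|\to 0$ — equivalently, that $\mathcal{E}$ is a genuine smooth submanifold near $\bzero$ with tangent space exactly $E$. This requires: (a) checking that the $m-1$ functions $F_u-F_1$, $u=2,\ldots,m$, have linearly independent differentials $l_u-l_1$ at $\bzero$ (which follows from property (A): the span of $l_1,\ldots,l_m$ being $(m-1)$-dimensional forces the differences to be independent when $k=1$), so that $\mathcal{E}$ is a codimension-$(m-1)$ submanifold by the implicit function theorem; and (b) a quantitative version — a Lipschitz/curvature bound on $\mathcal{E}$ — guaranteeing $\mathrm{dist}(w,E)=o(\|w\|)$ uniformly. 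Both are standard but must be stated carefully, since the whole estimate (\ref{85}) hinges on the linear term beating the quadratic and cross error terms. Once this geometric input is in place, the rest is a direct combination of Lemma \ref{lemobse} with Taylor's theorem.
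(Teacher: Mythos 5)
Your proposal is correct and follows essentially the same route as the paper: realize $\mathcal{E}$ as a smooth codimension-$(m-1)$ submanifold with tangent space $E$ at $\bzero$, take the nearest point $x(y)$ so that $w=y-x$ is normal to $\mathcal{E}$, invoke Lemma \ref{lemobse} to get a uniformly negative linear term $l_i(w)$, and let it dominate the Taylor remainders. The only (cosmetic) difference is that you apply Lemma \ref{lemobse} once at the origin on $E^{\perp}$ and absorb the tilt of the normal spaces $N_x$ into the error terms, whereas the paper applies it fiber-wise to the restrictions $l_j|_{N_x}$ and uses continuity of the resulting function $\delta^{x}$ to obtain the uniform constant.
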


\begin{proof}
Since there is only one linear dependency between the differentials $l_{1},\dots,l_{m}$ of the functions $F_{1}(x),\dots,F_{m}(x)$, the set
$\mathcal{E}$ is a smooth manifold in a vicinity of the origin, of dimension $n-m+1$.

\vskip .2cm
We introduce the tubular neighborhood $U_{r}\left(  \mathcal{E} \right)  $ of the manifold $\mathcal{E}$, which is comprised by all points $y$
of $\mathbb{R}^{n}$ which can be represented as $\left(  x,\mathsf{s} _{x}\right)  ,$ where $x\in\mathcal{E}$ and $\mathsf{s}_{x}$ is a vector
normal to $\mathcal{E}$ at $x,$ with norm less than $r.$ Let $\mathcal{E}_{r^{\prime}}\subset\mathcal{E}$ be the neighborhood of the origin in
$\mathcal{E}$, comprised by all $x\in\mathcal{E}$ with norm $\left\Vert x\right\Vert <r^{\prime},$ and $U_{r}\left(  \mathcal{E}_{r^{\prime}}\right)
$ be the part of $U_{r}\left(  \mathcal{E}\right)  $ formed by points hanging over $\mathcal{E}_{r^{\prime}}.$ If both $r$ and $r^{\prime}$ are small enough
then every $y\in U_{r}\left(  \mathcal{E}_{r^{\prime}}\right)  $ can be written as $\left(  x,\mathsf{s}_{x}\right)  $ with $x\in\mathcal{E}
_{r^{\prime}}$ in a unique way. Note that $x$ is the point on $\mathcal{E}$ closest to $y$. Also, for any $r,r^{\prime}>0$ the set $U_{r}\left(
\mathcal{E}_{r^{\prime}}\right)  $ evidently contains an open neighborhood of the origin.

\vskip .2cm
Now we are going to show that if $y=\left(  x,\mathsf{s}_{x}\right)  \in U_{r}\left(  \mathcal{E}_{r^{\prime}}\right)  ,$
$\mathsf{s}_{x}\neq 0,$ and both $r$ and $r^{\prime}$ are small enough then $\mathsf{F}\left(  y\right)  <\mathsf{F}\left(  x\right)  $. To this end, let
$N_{x}$ be the plane normal to $\mathcal{E}$ at $x$ (so that $\mathsf{s}_{x}\in N_{x}$). We identify $N_{x}$ with the linear space $\mathbb{R}^{m-1},$
so that $x$ corresponds to $0\in\mathbb{R}^{m-1}$.

\vskip.2cm
Now we will use Lemma \ref{lemobse}, applied not to a single space, but to the whole collection of the $\left(  m-1\right)  $-dimensional spaces
$N_{x},$ $x\in\mathcal{E}_{r^{\prime}}.$ To do this, we equip each $N_{x}$ with $m$ vectors $v_{1}^{x},\ldots,v_{m}^{x}\in N_{x},$ which generate $N_{x}$
and which satisfy the same convex linear relation. All this data is readily supplied by the linear functionals $l_{1},\ldots,l_{m},$ restricted to
$N_{x}.$ Indeed, each restricted functional $l_{j}^{x}\equiv l_{j}{|}_{N_{x}}$ can be uniquely written as $l_{j}^{x}\left(  \ast\right)  =\left\langle
\ast,v_{j}^{x}\right\rangle ,$ with $v_{j}^{x}\in N_{x}.$ Here the scalar product on $N_{x}$ is the one restricted from $\mathbb{R}^{n}.$ Clearly, for
every $x$ we have
\[ \lambda^{1}v_{1}^{x}+\ldots+\lambda^{m}v_{m}^{x}=0\ ,\]
since for every vector $\mathsf{s}\in N_{x}$ we have $\lambda^{1}l_{1}\left( \mathsf{s}\right)  +\ldots+\lambda^{m}l_{m}\left(  \mathsf{s}\right)  =0$ (as
for any other vector). Moreover, $l_{j}\left(  \mathsf{s}\right)  <0$ for some $j=j(\mathsf{s})$, $1\leq j\leq m$, see  the proof of Lemma \ref{lemobse}.

\vskip .2cm
Since the space $N_{x=\bzero}$ is orthogonal to the null-space $E$, the $m$ vectors $v_{1}^{0},\ldots,v_{m}^{0}$ do generate $N_{\bzero}$. Because the
spaces $N_{x}$ depend on $x$ continuously, all of them are transversal to $E$, provided $r^{\prime}$ is small. Thus, the vectors $v_{1}^{x},\ldots,v_{m}^{x}$
do generate the spaces $N_{x}$ for all $x\in\mathcal{E}_{r^{\prime}},$ provided again that $r^{\prime}$ is small enough. Lemma \ref{lemobse} provides
us now with a collection of functions $\delta^{x}$ on the spaces $\mathcal{W}_{\underline{\lambda}}^{x}$ of $m$-tuples of vectors from $N_{x}.$
It follows from the continuity, in $x$, of the spaces $N_{x}$ and the $m$-tuples $\{v_{1}^{x},\ldots,v_{m}^{x}\}$, and from the Lemma \ref{lemobse}
that the functions $\delta^{x}$ can be chosen in such a way that the resulting positive function $\Delta(x):=\delta^{x}\left(  v_{1}^{x},\ldots,v_{m}
^{x}\right)  $ on $\mathcal{E}_{r^{\prime}}$ is continuous in $x$ and also is uniformly positive, that is,
\[ \Delta(x)>2c\text{ for all }x\in\mathcal{E}_{r^{\prime}}\ ,\]
for some $c>0$, provided $r^{\prime}$ is small enough.

\vskip .2cm
In virtue of Lemma \ref{lemobse}, for every $x\in\mathcal{E}_{r^{\prime}}$ and each vector $\mathsf{s}\in N_{x}$ there exists an index
$j\left(  \mathsf{s}\right)  $ for which the value $l_{j\left(  \mathsf{s} \right)  }\left(  \mathsf{s}\right)  $ of the functional $l_{j\left(
\mathsf{s}\right)  }$ is not only negative but, moreover, satisfies
\begin{equation} l_{j\left(  \mathsf{s}\right)  }\left(  \mathsf{s}\right)  <-2c\left\Vert \mathsf{s}\right\Vert .\label{86} \end{equation}
Hence for $y=\left(  x,\mathsf{s}_{x}\right)  \in U_{r}\left(  \mathcal{E}_{r^{\prime}}\right)  $ we have
\begin{equation} F_{j\left(  \mathsf{s}_{x}\right)  }\left(  y\right)  <F_{j\left( \mathsf{s}_{x}\right)  }\left(  x\right)  -c\left\Vert \mathsf{s}_{x}\right\Vert \label{186}
\end{equation}
provided both $r$ and $r^{\prime}$ are small. Therefore
\[ \min_{j}\left\{  F_{j}\left(  y\right)  \right\}  \leq F_{j\left( \mathsf{s}_{x}\right)  }\left(  y\right)  <F_{j\left(  \mathsf{s}_{x}\right)
}\left(  x\right)  -c\left\Vert \mathsf{s}_{x}\right\Vert =\min_{j}\left\{ F_{j}\left(  x\right)  \right\}  -c\left\Vert \mathsf{s}_{x}\right\Vert \ ,\]
where the last equality holds since $F_{1}\left(  x\right)  =\ldots =F_{m}\left(  x\right)  $, so we are done.
\end{proof}

\vskip .2cm
Theorem \ref{T} is a straightforward consequence of the next Proposition.

\begin{proposition}
\label{lq1} The point $x=\bzero$ is a sharp local maximum of the function $\mathsf{F}$ \emph{if} the form
\begin{equation} \sum_{u=1}^{m}\,\lambda^{u}q_{u} \label{10}\end{equation}
is negative definite on $E$.

\vskip .2cm
In the special case when all the functions $F_{u}(x)$, $u=1,\dots,m$, are linear-quadratic, i.e. $F_{u}$ are sums of linear and quadratic forms,
\begin{equation} F_{u}(x)=l_{uj}x^{j}+q_{ujk}x^{j} x^{k}\ , \label{linquadfun}\end{equation}
the \emph{if} statement becomes the \emph{iff} statement.
\end{proposition}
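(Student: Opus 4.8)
The plan is to split a neighbourhood of $\bzero$ into the set $\mathcal{E}$ of (\ref{91}) and its complement: on the complement Lemma~\ref{prele} already does the work, so the new content is the analysis of $\mathsf{F}$ on $\mathcal{E}$. First I would pin down the local structure of $\mathcal{E}$. Its defining equations may be taken to be $F_1-F_2=0,\dots,F_1-F_m=0$, with differentials $l_1-l_2,\dots,l_1-l_m$. Because there is a \emph{single} linear relation among $l_1,\dots,l_m$ (property (B) with $k=1$), namely the strictly convex one $\sum_u\lambda^u l_u=0$, these $m-1$ differentials are linearly independent: any dependence among them would produce a second, independent relation among the $l_u$, which is incompatible with $\Lambda:=\sum_u\lambda^u>0$. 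Hence $\mathcal{E}$ is an analytic submanifold of dimension $n-m+1$ near $\bzero$, with tangent space $\{x:l_1(x)=\dots=l_m(x)\}$; and this tangent space equals $E=\bigcap_u\ker l_u$, since if all $l_u(x)$ share a common value $c$ then $0=\sum_u\lambda^u l_u(x)=c\Lambda$ forces $c=0$. Therefore, near $\bzero$, $\mathcal{E}$ is the graph $x=\xi+\psi(\xi)$ of an analytic map $\psi\colon E\to E^{\perp}$ with $\psi(0)=0$, $D\psi(0)=0$, so $\psi(\xi)=O(\|\xi\|^2)$.

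For the \emph{if} part I would evaluate $\mathsf{F}$ along $\mathcal{E}$. Since $F_1(x)=\dots=F_m(x)$ there, $\Lambda\,\mathsf{F}(x)=\sum_u\lambda^u F_u(x)$, and by (\ref{decofuf}) the linear terms cancel via $\sum_u\lambda^u l_u=0$, leaving $\Lambda\,\mathsf{F}(x)=Q(x)+o(\|x\|^2)$ with $Q:=\sum_u\lambda^u q_u$. Substituting $x=\xi+\psi(\xi)$ and using $\psi(\xi)=O(\|\xi\|^2)$ (so the cross and higher terms in $\psi$ contribute only $O(\|\xi\|^3)$) gives $\mathsf{F}(\xi+\psi(\xi))=\Lambda^{-1}\bigl(Q(\xi)+o(\|\xi\|^2)\bigr)$. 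If $Q|_E$ is negative definite, say $Q(\xi)\le-c_0\|\xi\|^2$ on $E$, this is $<0$ on a punctured neighbourhood of $\bzero$ inside $\mathcal{E}$. For $y\notin\mathcal{E}$ with $\|y\|$ small, Lemma~\ref{prele} gives $\mathsf{F}(y)<\mathsf{F}(x(y))-c\|x(y)-y\|$ for the nearest point $x(y)\in\mathcal{E}$ (with $x(y)\to\bzero$): if $x(y)=\bzero$ then $\mathsf{F}(y)<-c\|y\|<0$, and otherwise $\mathsf{F}(x(y))<0$ already; in either case $\mathsf{F}(y)<0=\mathsf{F}(\bzero)$, which is the claimed sharp local maximum.

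For the \emph{iff} in the linear-quadratic case, observe that Lemma~\ref{prele} used nothing about the quadratic terms, only the convex relation; so the argument above shows, for any linear-quadratic family, that $\bzero$ is a sharp local maximum of $\mathsf{F}$ \emph{iff} it is a sharp local maximum of the restriction $\mathsf{F}|_{\mathcal{E}}$, which here equals $\Lambda^{-1}Q(\xi+\psi(\xi))$ \emph{exactly}. One implication is the \emph{if} part. For the converse, the $2$-jet of $\mathsf{F}|_{\mathcal{E}}$ along $E$ (via the chart $\xi\mapsto\xi+\psi(\xi)$) is $\Lambda^{-1}Q|_E$, so a sharp maximum forces $Q|_E$ to be negative semidefinite: a direction $\xi_0\in E$ with $Q(\xi_0)>0$ would give $\mathsf{F}|_{\mathcal{E}}(t\xi_0+\psi(t\xi_0))=\Lambda^{-1}t^2Q(\xi_0)+O(t^3)>0$ for small $t$. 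The step I expect to be the crux is upgrading "negative semidefinite" to "negative definite", i.e.\ ruling out a nontrivial kernel of $Q|_E$: here one must exploit the precise shape of the bending map $\psi$ — determined by the $q_u$ — to exhibit, along the kernel of $Q|_E$, points of $\mathcal{E}$ with $Q\ge 0$ and thus contradict the sharp maximum. (When $Q|_E$ is nondegenerate this obstacle evaporates, the positive‑eigendirection argument above doing everything.)
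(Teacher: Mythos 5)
Your treatment of the \emph{if} part is correct and follows essentially the paper's route: Lemma \ref{prele} confines the problem to the set $\mathcal{E}$, which is exhibited as a graph over $E$ (your independence check for $l_1-l_2,\dots,l_1-l_m$ and the identification of the tangent space with $E$ match the paper's dimension count $n-m+1$), and the second-order behaviour of $\mathsf{F}|_{\mathcal{E}}$ at $\bzero$ is identified with $\Lambda^{-1}Q|_{E}$, $\Lambda=\sum_u\lambda^u$, $Q=\sum_u\lambda^u q_u$. The paper obtains this identification by differentiating the restriction twice via the chain rule and then killing the unknown second derivatives of the graph map with the relation $\sum_u\lambda^u l_u=0$; your device of forming $\Lambda\,\mathsf{F}=\sum_u\lambda^u F_u$ on $\mathcal{E}$ first, so that the linear terms cancel identically, and only then substituting $x=\xi+\psi(\xi)$, gives the same conclusion slightly more cleanly.

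The gap is in the converse, and the ``crux'' you anticipate (extracting definiteness of $Q|_E$ from the precise shape of the bending map $\psi$) is not the paper's route and cannot be carried out if ``sharp'' is read as merely ``strict''. Take $n=2$, $m=2$, $F_1=y+x^2$, $F_2=-y-x^2-y^2$, so $l_1+l_2=0$ with $\lambda^1=\lambda^2=1$, $E=\{y=0\}$ and $Q=q_1+q_2=-y^2$, which vanishes identically on $E$. Yet $\mathsf{F}=\min(F_1,F_2)<0$ off the origin: both cannot be positive (that would force $y+x^2>0$ and $y+x^2<-y^2\le 0$), and the equality cases $F_1=0$, $F_2\ge 0$ or $F_2=0$, $F_1\ge0$ occur only at $\bzero$. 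So $\bzero$ is a strict (indeed global) maximum of this linear-quadratic family while $Q|_E$ is not negative definite; no analysis of $\psi$ can rule out a kernel of $Q|_E$. The paper instead reduces both directions to the single fact that the Hessian of the restriction $M=\mathsf{F}|_{\mathcal{E}}$ at $\bzero$, in the chart over $E$, equals $2Q/\Lambda$ on $E$, and reads the converse off from this; that step is legitimate precisely when ``sharp local maximum'' is understood quantitatively (a two-sided quadratic bound, as in the Remarks after Theorems \ref{theorC6} and \ref{octa}): if $Q(\xi_0)\ge 0$ for some unit $\xi_0\in E$, then along $x(t)=t\xi_0+\psi(t\xi_0)\in\mathcal{E}$ one has $\Lambda\,\mathsf{F}(x(t))=t^2Q(\xi_0)+O(t^3)\ge -Ct^3$, incompatible with an estimate $\mathsf{F}\le -c\Vert x\Vert^2$. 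That argument is already contained in the expansion you derived; what is missing in your write-up is not a finer use of $\psi$ but the quantitative reading of sharpness, under which the converse is immediate — under the merely-strict reading it is false, as the example shows.
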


\begin{proof}
In view of Lemma \ref{prele} we can restrict our search of the maximum of the function $\mathsf{F}$ to the submanifold $\mathcal{E}$.

\vskip .2cm
Note that the plane $E$ is the tangent plane to $\mathcal{E}$ at the point $\bzero\in\mathcal{E}$, so the coordinate projection of $\mathcal{E}$ to
$E$ introduces the local coordinates on $\mathcal{E}$ in a vicinity of $\bzero$. As a result, $\mathcal{E}$ can be viewed as a graph of a function $Z$ on $E$,
$Z\left(  \mathbf{x}\right)  \in\mathbb{R}^{m-1}$ $:$
\[ \mathcal{E}=\left\{  \mathbf{x,z}:\mathbf{x}\in E,\mathbf{z}=\left( z_{1}\left(  \mathbf{x}\right)  ,\ldots,z_{m-1}\left(  \mathbf{x}\right) \right)  \right\}  .\]
This is an instance of the implicit function theorem. The point $\mathbf{x}=\bzero$ is a critical point of all the functions $z_{l}\left(  \mathbf{x}\right)  .$

\vskip .2cm
Denote by $M$ the restriction of any of the functions $F_{i}$ to $\mathcal{E}$. Clearly, it is a smooth function, and the differential $dM$
vanishes at $\bzero\in\mathcal{E}$. So our proposition would follow once we check that\textbf{ }the second quadratic form of $M$ at $\bzero$ is twice the
form\textbf{ }$\left(  \ref{10}\right)  .$ To see that, let us compute the derivative $\frac{d^{2}M}{dx_{1}^{2}}$ at the origin; the computation of other
second derivatives repeats this computation. We have
\begin{align*}
&  \frac{d}{dx_{1}}M\left(  \mathbf{x,z}\left(  \mathbf{x}\right)  \right) =\left(  \!\frac{\partial}{\partial x_{1}}M\!\right)  \!\left(  \mathbf{x,z}
\left(  \mathbf{x}\right)  \right)  \\
&  \!+\left(  \!\frac{\partial}{\partial x_{n-m+2}}M\!\right)  \!\left( \mathbf{x,z}\left(  \mathbf{x}\right)  \right)  \frac{\partial}{\partial
x_{1}}z_{1}\left(  \mathbf{x}\right)  +\ldots+\left(  \!\frac{\partial }{\partial x_{n}}M\!\right)  \!\left(  \mathbf{x,z}\left(  \mathbf{x}\right)
\right)  \frac{\partial}{\partial x_{1}}z_{m-1}\left(  \mathbf{x}\right)  ,
\end{align*}
and then
\begin{align*}
\frac{d^{2}}{dx_{1}^{2}}M\left(  \mathbf{x,z}\left(  \mathbf{x}\right) \right)  {|}_{\mathbf{x}=\bzero} &  =2\left[  q_{1}\right]  _{1,1}+\left[
l_{1}\right]  _{1}\cdot0\text{ (since all }\frac{\partial}{\partial x_{1} }z_{l}\left(  \mathbf{0}\right)  =0\text{)}\\
&  +\left[  l_{1}\right]  _{n-m+2}\cdot\frac{\partial^{2}}{\partial x_{1}^{2} }z_{1}\left(  \mathbf{0}\right)  +\ldots+\left[  l_{1}\right]  _{n}\cdot
\frac{\partial^{2}}{\partial x_{1}^{2}}z_{m-1}\left(  \mathbf{0}\right)  .
\end{align*}
Let us introduce the vector
\[ \Delta=\left(  0,\ldots,\frac{\partial^{2}}{\partial x_{1}^{2}}z_{1}\left( \mathbf{0}\right)  ,\ldots,\frac{\partial^{2}}{\partial x_{1}^{2}}
z_{m-1}\left(  \mathbf{0}\right)  \right) \  .\]
Then we have
\[ \frac{d^{2}}{dx_{1}^{2}}M_{1}\left(  \mathbf{x,z}\left(  \mathbf{x}\right) \right)  {|}_{\mathbf{x}=\bzero}=2\left[  q_{1}\right]  _{1,1}+l_{1}\left(
\Delta\right) \  .\]
Since we have $m-1$ identities
\[ M_{1}\left(  \mathbf{x,z}\left(  \mathbf{x}\right)  \right)  =M_{2}\left( \mathbf{x,z}\left(  \mathbf{x}\right)  \right)  =M_{m}\left(  \mathbf{x,z}
\left(  \mathbf{x}\right)  \right) \  ,\]
we can write also
\[ \frac{d^{2}}{dx_{1}^{2}}M\left(  \mathbf{x,z}\left(  \mathbf{x}\right) \right)  {|}_{\mathbf{x}=\bzero}=2\left[  q_{l}\right]  _{1,1}+l_{l}\left(
\Delta\right)  ,\ l=2,\ldots,m\ \ .\]
By $\left(  \ref{s2}\right)  $ we then have
\[ \frac{d^{2}}{dx_{1}^{2}}M\left(  \mathbf{x,z}\left(  \mathbf{x}\right)
\right)  {|}_{\mathbf{x}=\bzero}=2\left(  \sum_{l}\lambda^{l}\left[  q_{l}\right]
_{1,1}\right)  ,\]
so our claim follows. \end{proof}

\subsection{General case}
Let us introduce the functions
\[ \mathsf{F}_{p}\left(  x\right)  :=\min\left\{  F_{m_{p-1}+1}\left(  x\right)
,\dots,F_{m_{p}}\left(  x\right)  \right\}  \ \]
and the manifolds
\[ \mathcal{E}_{p}=\left\{  x\in\mathbb{R}^{n}:F_{m_{p-1}+1}\left(  x\right) =\dots=F_{m_{p}}\left(  x\right)  \right\}  \ .\]
In the vicinity of the origin $\mathbf{0}\in\mathbb{R}^{n}$ the manifolds $\mathcal{E}_{p}$ meet in general position, due to the conditions (A), (B), so
their intersection
\[ \mathcal{E=}\bigcap_{p=1}^{k}\,\mathcal{E}_{p}\ ,\]
is a smooth manifold as well, of dimension $n-m+k$. As we know from the previous section, the point $\mathbf{0}\in\mathcal{E}_{p}$ is a critical point
of the restriction of the function $\mathsf{F}_{p}$ to $\mathcal{E}_{p}$, and its second differential equals to the form $Q_{p},$ $p=1,...,k$. Hence it
follows from (C) that the function $\mathsf{F}\left(  x\right)  =\min\left\{ \mathsf{F}_{1}\left(  x\right)  ,\dots,\mathsf{F}_{k}\left(  x\right)
\right\}  ,$ restricted to $\mathcal{E},$ is negative in the vicinity $W\subset\mathcal{E}$ of the point $x=\mathbf{0,}$ except at the point
$\mathbf{0,}$ where $\mathsf{F}\left(  \mathbf{0}\right)  =0$ (see the end of the first proof in the previous section).

\vskip.2cm As for $k=1$, it would be nice to show that if a point $y\in\mathbb{R}^{n}$ happens to be away from $\mathcal{E},$ while $\left\Vert
y\right\Vert $ is small enough, then one can find a point $x$ in $W$ such that for all $z\in\left[  x,y\right]  $ we have
\begin{equation} F_{u}\left(  z\right)  <F_{u}\left(  x\right)  \mathbf{<0}\label{50}\end{equation}
for some $u=1,...,m.$ Then we would be done. It seems, however, that it is not necessarily the case. We will establish a weaker property, which is also
sufficient for our purposes. Let $\mathcal{E}(r^{\prime})\subset\mathcal{E}$ be a ball, centered at the origin, of radius $r^{\prime}$ in $\mathcal{E}$,
and let $U_{r}\left(  \mathcal{E}(r^{\prime})\right)  $ be a tubular neighborhood of $\mathcal{E}(r^{\prime})$, with both $r$ and $r^{\prime}$
being small enough. Let us represent $U_{r}\left(  \mathcal{E}_{r^{\prime} }\right)  $ as a union of segments of the form $[\left(  x,0\right)  ,\left(
x,\mathsf{s}_{x}\right)  [\,$, which do not intersect outside $\mathcal{E},$ where each vector $\mathsf{s}_{x}$ is normal to $\mathcal{E}$ at $x.$ We will
show that for each $x,\mathsf{s}_{x}$ and $t\in\left(  0,1\right)  $ one can find and index $u=u\left(  x,\mathsf{s}_{x},t\right)  $ such that
$F_{u}\left(  x,t\mathsf{s}_{x}\right)  <0.$

\vskip.2cm The normal vector $\mathsf{s}_{x}$ is an element of the normal vector space $N_{x}$. $N_{x}$ can be decomposed into direct sum of $k$ vector
spaces, $N_{x}=\oplus_{p=1}^{k}N_{x}^{p},$ where each space $N_{x}^{p}$ is generated by the gradients, at $x$, of the functions $F_{m_{p-1}+1}\left(
x\right)  ,\dots,F_{m_{p}}\left(  x\right)  $. Without loss of generality we can suppose that the subspaces $N_{x}^{p}$ are orthogonal, by changing the
Euclidean structure. Due to $\left(  \ref{s22}\right)  ,$ we can suppose the existence of a value $\mathfrak{v}>0$ such that
\[ \min\left\{  Q_{1}(\xi),...,Q_{k}(\xi)\right\}  |_{_{\xi\in E}}\leq -\mathfrak{v}\left\Vert \xi\right\Vert ^{2}\ .\]
Therefore, given $x\in\mathcal{E}$ with $\left\Vert x\right\Vert =\varepsilon$, we can suppose without loss of generality that
\begin{equation} \mathsf{F}_{1}\left(  \left(  x,0\right)  \right)  \leq-\frac{\mathfrak{v}} {2}\,\varepsilon^{2}\ .\label{s3}\end{equation}

\vskip .2cm
We will proceed in five steps.

\vskip .2cm
\textbf{1. }In the easy case when our vector $\mathsf{s}_{x}\in N_{x}$ is a vector from the subspace $N_{x}^{1},$ we are done, as in the
previous section, since we know from relation $\left(  \ref{186}\right)$ that for some $h\in\left\{  1,...,m_{1}\right\}  $ the function $F_{h}$
satisfies $\left(  \ref{186}\right)$, and so
\begin{equation} \mathsf{F}_{1}\left(  \left(  x,\mathsf{s}_{x}\right)  \right)  <\mathsf{F}_{1}\left(  \left(  x,0\right)  \right)  -c_{1}\left\Vert \mathsf{s}
_{x}\right\Vert \ ,\label{s1}\end{equation}
where the value $c_{1}>0$ is determined by the functionals $l_{1} ,\dots,l_{m_{1}}.$

\vskip .2cm
\textbf{2. }Consider a more general case, when the vector $\mathsf{s}_{x}$ is not in $N_{x}^{1},$ but its first coordinate
$\mathsf{s}_{x}^{1}$ in the decomposition
\[ \mathsf{s}_{x}=\sum_{p}\mathsf{s}_{x}^{p}\ ,\ \mathsf{s}_{x}^{p}\in N_{x}^{p}\ ,\]
satisfies the relation: $\left\Vert \mathsf{s}_{x}^{1}\right\Vert \geq {\sf r}\left\Vert \mathsf{s}_{x}\right\Vert $ with some ${\sf r}>0.$ Let us denote the set
of all such vectors $\mathsf{s}_{x}$ by $C_{{\sf r}}\left(  N_{x}^{1}\right) \subset N_{x}$, this is the ${\sf r}$-cone around $N_{x}^{1}.$ The smaller the
constant ${\sf r}$ is, the bigger is the cone $C_{{\sf r}}\left(  N_{x}^{1}\right)$. For $\mathsf{s}_{x}\in C_{{\sf r}}\left(  N_{x}^{1}\right)  $ the relation $\left(
\ref{s1}\right)  $ still holds, but with $c_{1}$ replaced by ${\sf r}c_{1}.$

\vskip .2cm
\textbf{3a. }The delicate case therefore is when $\mathsf{s}_{x}$ is in $N_{x}^{2},$ say, because we do not have the relation $\left(
\ref{s1}\right)  $ anymore. The only thing we know is that among the differentials $d_{x}F_{1},\dots,d_{x}F_{m_{1}},$ \textit{computed at the
point} $x$ (which is indicated by the subscript in $d_{x}$), there is at least one, $d_{x}F_{h},$ $1\leq h\leq m_{1},$ for which
\begin{equation} d_{x}F_{h}\left(  \mathsf{s}_{x}^{p}\right)  \leq0\label{s4} \end{equation}
-- that follows from the condition (A); hence the estimate $\left( \ref{s1}\right)  $ might not hold. For all we know the function $F_{h}$ might
even grow along the segment $\left[  \left(  x,0\right)  ,\left( x,\mathsf{s}_{x}\right)  \right]  ,$ since we have no information about the
forms $q_{1},...,q_{m_{1}}$ outside $E.$ But we stress that the possible increase of the function $F_{h}$ is not linear, due to $\left(  \ref{s4}
\right)  $, so it is at least of the second order in $t.$ Hence, the function $F_{h}$ at the points $\left(  x,t\mathsf{s}_{x}\right)  $ is still negative,
provided $\left\Vert t\mathsf{s}_{x}\right\Vert <\frac{1}{C}\varepsilon$, once $C$ is big enough, because of the above mentioned at least quadratic in $t$
behavior of the function $F_{h}$ along the direction $\mathsf{s}_{x},$ and due also to $\left(  \ref{s3}\right)  .$

\vskip .2cm
\textbf{3b.} In order to treat the remaining part of the segment $\left[  \left(  x,0\right)  ,\left(  x,\mathsf{s}_{x}\right)  \right]  $ we
will use the functions $F_{m_{1}+1},\dots,F_{m_{2}}$. Note that at the point $x$ the quadratic forms $q_{m_{1}+1},\dots,q_{m_{2}}$ -- even being positive
-- do not get above the level $\left(  \tilde{C}\varepsilon\right)  ^{2},$ while (at least) one of the differentials $d_{x}F_{m_{1}+1},\dots
,d_{x}F_{m_{2}}$ -- say, $d_{x}F_{h^{\prime}}$ -- decays linearly along the direction of $\mathsf{s}_{x}:$
\[ d_{x}F_{h^{\prime}}\left(  t\mathsf{s}_{x}\right)  <-2c_{2}\left\Vert t\mathsf{s}_{x}\right\Vert \ ,\]
where the value $c_{2}$ is determined by the linear functionals $l_{m_{1}+1},\dots,l_{m_{2}}$, compare with $\left(  \ref{s1}\right)  $. In particular,
for $\left\Vert t\mathsf{s}_{x}\right\Vert \geq\frac{1}{C}\varepsilon$ we have $d_{x}F_{h^{\prime}}\left(  t\mathsf{s}_{x}\right)  <-2\frac{c_{2}}
{C}\varepsilon,$ which beats $\left(  \tilde{C}\varepsilon\right)  ^{2}$ once $\varepsilon$ is small enough. Therefore the function $F_{h^{\prime}}$ is
negative on the segment $\left[  \left(  x,t\mathsf{s}_{x}\right)  ,\left( x,\mathsf{s}_{x}\right)  \right]  $ once $\left\Vert t\mathsf{s}
_{x}\right\Vert \geq\frac{1}{C}\varepsilon,$ provided $\left(  x,\mathsf{s}_{x}\right)  \in U_{r}\left(  \mathcal{E}(r^{\prime})\right)  $ with $r,r^{\prime}$ small.

\vskip .2cm
\textbf{4. }The same argument applies to the case when $\mathsf{s}_{x}$ is not in $N_{x}^{2},$ but belongs to the cone $C_{{\sf r}}\left(
N_{x}^{2}\right)  $, see step \textbf{2} above.

\vskip.2cm \textbf{5. }Since the union of the cones coincides with $N_{x}$,
\[ \bigcup_{p=1}^{k}\,C_{{\sf r}}\left(  N_{x}^{p}\right)  =N_{x}\ ,\]
provided ${\sf r}$ is small enough, the proof is over. \myblacksquare

\section{Platonic clusters}
\subsection{$\delta$-rotation process}\label{delrotp}
The $\delta$-rotation process was introduced in \cite{OS4}. Here is its description, for the case of the cluster $O_{6}.$ Consider  the cluster
of the six tangent lines to the unit sphere, which contain the edges of the regular tetrahedron. The points of the sphere at which tangent lines pass are
the edge middles of the regular tetrahedron. The initial position of the edges of the tetrahedron in our $\delta$-rotation process are shown in blue on
Figure \ref{A4configurations}.

\begin{figure}[H]
\centering
\includegraphics[scale=0.7]{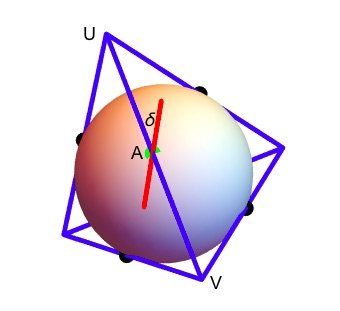} \vskip -.2cm\caption{Sphere tangent to tetrahedron edges}
\label{A4configurations}
\end{figure}

\vskip -.2cm
Then each edge is rotated around the diameter of the unit sphere, passing through the middle of the edge, by an angle $\delta$. On Figure
\ref{A4configurations} the point $A$ (in green) is the middle of the edge $UV$. The line, passing through the point $A$ and rotated by the angle
$\delta$, is shown in red. The lines passing through other middles of edges are rotated according to $\mathbb{A}_{4}$ symmetry. This is our $\delta
$-process. For $\delta=\pi/4$ this is exactly the cluster $O_{6}$.

\vskip .2cm
The distance function $d$ becomes the function of $\delta,$ see  Figure \ref{DistanceGraphA4}.

\begin{figure}[H]
\vspace{.2cm} \centering
\includegraphics[scale=0.6]{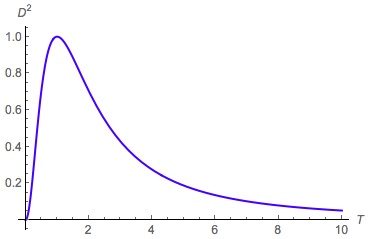} \vskip -.2cm\caption{Graph of $d^{2}(T)$, $T=\tan\delta$}
\label{DistanceGraphA4}
\end{figure}
It gets its maximal value at  $\delta=\pi/4,$ i.e. at the cluster $O_{6}$.

\vskip .2cm
A similar construction can be performed for each pair of dual Platonic bodies. Namely, let a unit sphere touch the edge middles of a Platonic body
$\mathcal{P}$. We can rotate all the edges of $\mathcal{P}$ around the axes passing through the
center of the sphere and tangency points by the angle $\delta$.  When $\delta$ reaches the value $\pi/2,$ the edges form the Platonic body dual to $\mathcal{P}$.

\vskip .2cm
For the pair octahedron-cube (respectively, icosahedron-dodecahedron) the function $d\left(  \delta\right) $ is shown on Figure
\ref{MinDistOC} (respectively, Figure \ref{PlotMinID}).

\begin{figure}[H]
\vspace{.2cm} \centering
$\!\!\!\!\!\!\!$\includegraphics[scale=0.38]{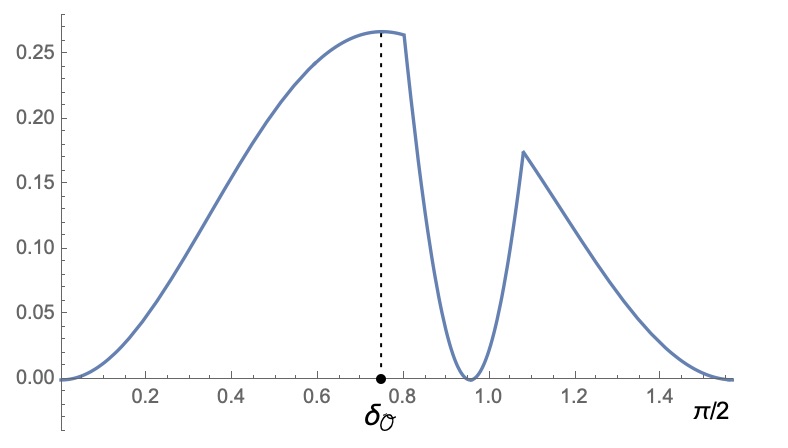}
\vskip -.2cm
\caption{Graph of the square of the minimal distance for the pair octahedron-cube}
\label{MinDistOC}
\end{figure}

\begin{figure}[H]
\centering
\includegraphics[scale=0.3]{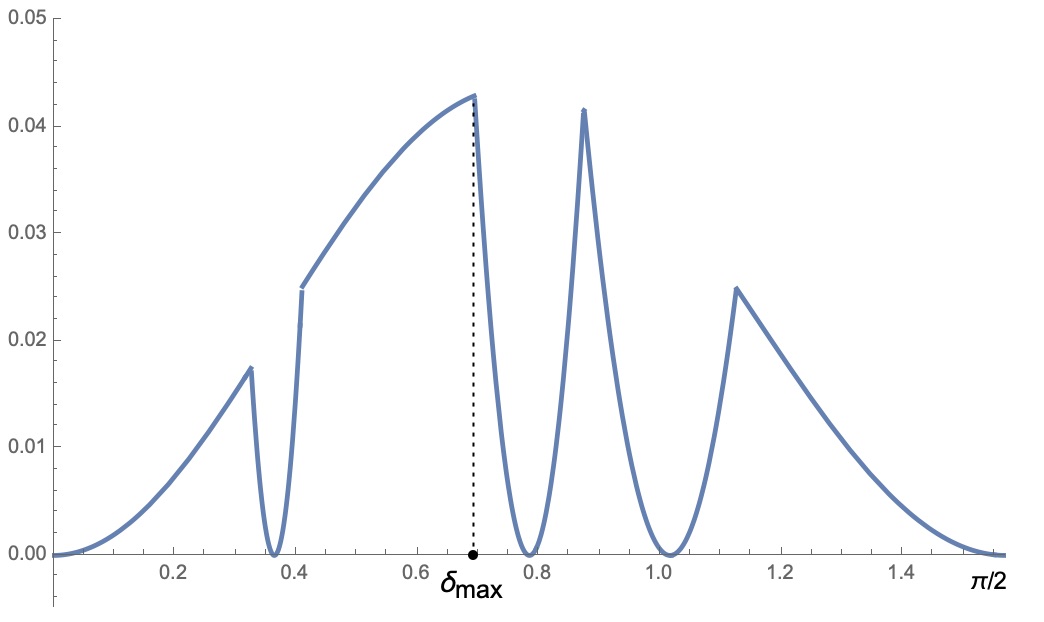} \vskip .2cm\caption{Graph of the
square of the minimal distance for the pair icosahedron-dodecahedron}
\label{PlotMinID}
\end{figure}

The resulting maximal clusters (of twelve cylinders, at the angle $\delta_{\mathcal{O}}$, for the pair octahedron-cube, and of thirty cylinders, at the angle
$\delta_{\text{max}}$, for the pair icosahedron-dodecahedron) are shown on Figures \ref{OC-1} and \ref{MaxID-ConfigViewFrom5-axis} respectively.

\begin{figure}[H]
\vspace{-.2cm}
\centering
\includegraphics[scale=0.38]{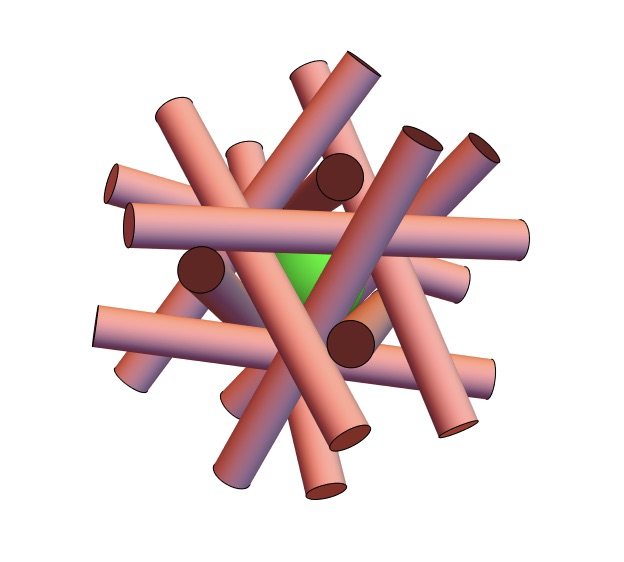}
\caption{Octahedron/cube maximal configuration of cylinders, view from a vertex of the cube}
\label{OC-1}
\end{figure}

\vspace{3cm}
\begin{figure}[H]
\centering
\includegraphics[scale=0.5]{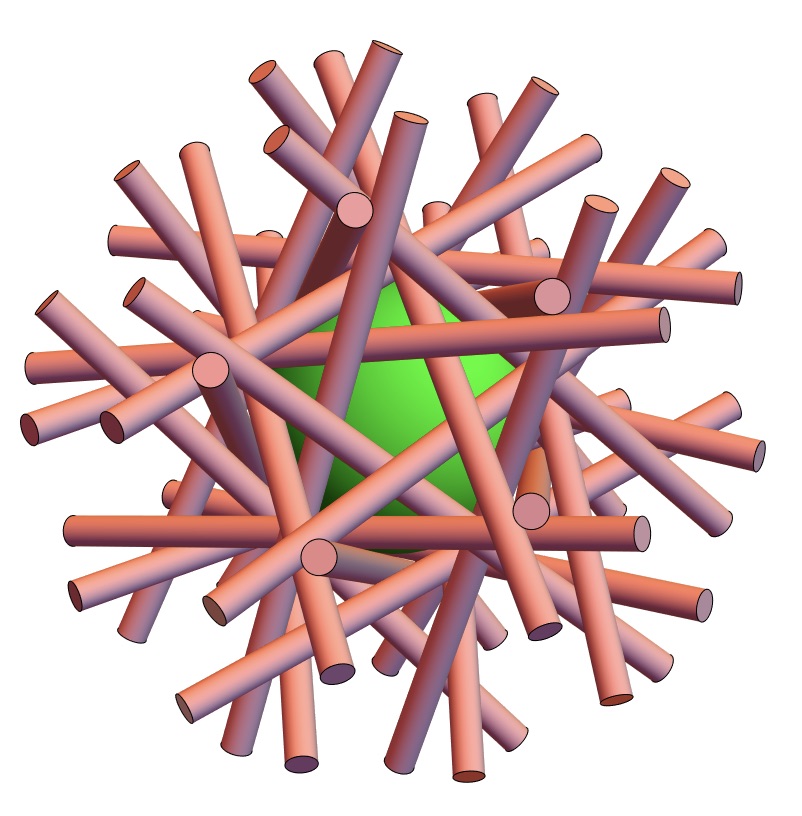}\caption{Maximal cluster, view from the tip of a 5-fold axis}
\label{MaxID-ConfigViewFrom5-axis}
\end{figure}

\subsection{Minimal clusters of tangent lines}
The clusters where the function $d$ vanishes are also quite interesting.

\vskip .2cm
For the pair octahedron-cube the minimum happens at the angle $\delta=\arctan \left(\sqrt{2}\right)$. The resulting figure, formed by four triangles of edge length $2\sqrt{3}$, is shown on Figure \ref{Octahedron-CubeMinimum}.
\begin{figure}[H]
\vspace{-.4cm}
\centering
\includegraphics[scale=0.32]{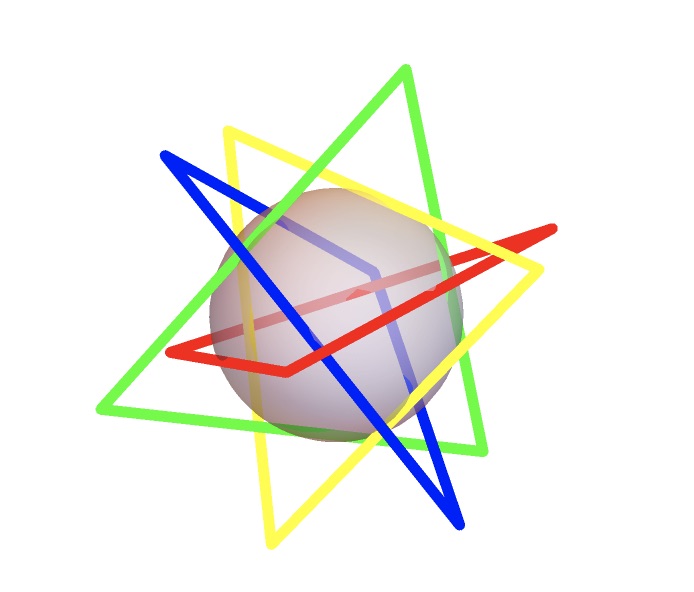}
\vskip -.2cm
\caption{Octahedron/cube minimum}
\label{Octahedron-CubeMinimum}
\end{figure}

For the pair icosahedron-dodecahedron there are several minima, see Graph \ref{PlotMinID}.  For example, the second minimum happens at $\delta=\frac{\pi}{4}$.
The thirty edges split into five one-skeletons of the tetrahedron. Thus we get the cluster of one-skeletons  of the five tetrahedra of edge length $2\sqrt{2}$,
inscribed in the dodecahedron. It is shown on Figure \ref{SecondMinimum}.
\begin{figure}[H]
\vspace{-.2cm}
\centering
\includegraphics[scale=0.54]{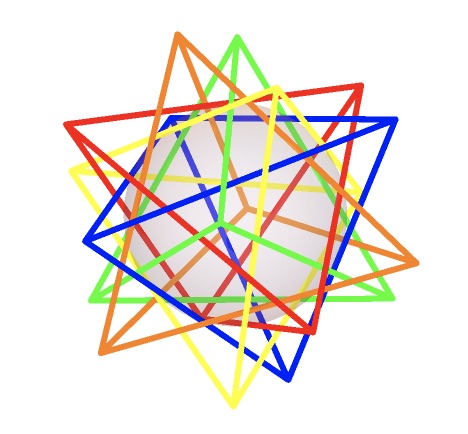}
\vskip -.2cm
\caption{Second minimum}
\label{SecondMinimum}
\end{figure}

\section{Conjectures and questions}\label{conjques}
In this last section we formulate some open problems.

\vskip .2cm
Let $\mathfrak{C}$ be a cluster of solid bodies $\Gamma_{1},\dots,\Gamma_{L}$, touching the unit sphere $\mathbb{S}^{n-1}\subset\mathbb{R}^{n}$. Then the
solid bodies $\Gamma_{j}\times\mathbb{R}$, $j=1,\dots,L$, touch the unit sphere $\mathbb{S}^{n}\subset\mathbb{R}^{n+1}$. We denote the so defined
cluster by $\mathrm{K}(\mathfrak{C})$; this construction is due to Kuperberg, hence our notation. For example, if $\mathfrak{C}$ is a cluster of six unit
discs touching the central unit disc, then $\mathrm{K}(\mathfrak{C})$ is the cluster $C_{6}$ of unit cylinders touching the unit sphere $\mathbb{S}
^{2}\subset\mathbb{R}^{3}$.

\vskip .2cm
Let $D$ be the distance function; $D(\mathfrak{C})$ is the value of the distance function on $\mathfrak{C}$. In case the cluster $\mathrm{K}
(\mathfrak{C})$ can be unlocked to (maybe several) locally maximal clusters in $\mathbb{R}^{n+1}$, choose the cluster with maximal value of distance function
among them and denote this value by $D_{1}(\mathfrak{C})$. Otherwise, put $D_{1}(\mathfrak{C})=D(\mathfrak{C}).$ Similarly, denote by $D_{j}
(\mathfrak{C})$ the corresponding value of the distance function for the cluster $\mathrm{K}^{j}(\mathfrak{C})$ in $\mathbb{R}^{n+j}$, $j=1,2,\dots$

\vskip .2cm
\textbf{q1}. Let $\mathfrak{C}$ in $\mathbb{R}^{n}$. We believe that under some conditions on $D(\mathfrak{C})$ the cluster $\mathrm{K}(\mathfrak{C})$ can
be unlocked. Plausibly, there exists a function $\mathfrak{d}(n)$ such that if $D(\mathfrak{C})$ is bounded from above by $\mathfrak{d}(n)$ then
$\mathrm{K}(\mathfrak{C})$ is unlockable. Note that a restriction on the value of $D(\mathfrak{C})$ is needed. For example, if $\mathfrak{C}$ is the maximal  cluster of two, or even three, congruent circles in $\mathbb{R}^{2}$ then $\mathrm{K}(\mathfrak{C})$ is rigid.

\vskip .2cm
\textbf{q2. } Let $\mathfrak{bd}(n)$ be the maximal possible value of the function $\mathfrak{d}(n)$, from \textbf{q1}, for clusters of a certain class,
say clusters of a congruent balls. We believe that {\bf(i)} the function $\mathfrak{bd}(n)$ does not decrease in $n$; {\bf(ii)} given a cluster $\mathfrak{C}$,
the function $D_{j}(\mathfrak{C})$ does not decrease in $j$. What is upper limit of $D_{j}(\mathfrak{C})$ when $j\rightarrow\infty$?

\vskip .2cm
\textbf{q3. }Twelve unit spheres $\mathbb{S}^{2}$ can touch the central unit sphere in $\mathbb{R}^{3}$. Motivated by (\textbf{q1}) and  (\textbf{q2}) we believe
that more and more space opens when we iterate the operation $\mathrm{K}$. Therefore the following question arises: what is the minimal $j$ such that thirteen
bodies $\mathbb{S}^{2}\times\mathbb{R}^{j}$ can touch the central unit sphere $\mathbb{S}^{2+j}$? Plausibly, $j=1$.

\vskip .2cm
\textbf{q4. }What are possible generalizations of the cluster $O_{6}$ to higher dimensions? Here a cylinder can be replaced by $\mathrm{C}_{a,b}
:=\mathbb{S}^{a}\times\mathbb{R}^{b}$ in $\mathbb{R}^{a+b+1}$. A more precise question: for which $a$ and $b$ are there obvious clusters of bodies congruent
to $\mathrm{C}_{a,b}$, having the distance function equal to 1? Can such clusters be obtained by some higher dimensional generalizations of the
$\delta$-rotation process (see Section \ref{delrotp}) applied to faces of certain dimension of higher simplices/octahedra/cubes?

\vskip .2cm
\textbf{q5. }Let $\mathcal{I}_{4}$ and $\mathcal{D}_{4}$ be four-dimensional analogues of the exceptional platonic bodies in $\mathbb{R}^{3}$. Is there a
version of the $\delta$-rotation process which produces the analogue of five tetrahedra inscribed into a dodecahedron?

\vskip .6cm\noindent{\textbf{Acknowledgements.} Part of the work of S. S. has been carried out in the framework of the Labex Archimede (ANR-11-LABX-0033)
and of the A*MIDEX project (ANR-11- IDEX-0001-02), funded by the Investissements d'Avenir French Government program managed by the French
National Research Agency (ANR). Part of the work of S. S. has been carried out at IITP RAS. The support of Russian Foundation for Sciences (project No.
14-50-00150) is gratefully acknowledged by S. S. The work of O. O. was supported by the Program of Competitive Growth of Kazan Federal University and
by the grant RFBR 17-01-00585.}

\end{document}